\newtheorem{theo}{Theorem}
\newtheorem{prop}[theo]{Proposition}
\newtheorem{defin}[theo]{Definition}
\newtheorem{lemma}[theo]{Lemma}
\newtheorem{main}{Main theorem}
\newtheorem{rem}[theo]{Remark}
\newtheorem{ass}{Assumption}
\numberwithin{equation}{section}
\newcommand{\R}{\mathbb{R}}
\newcommand{\N}{\mathbb{N}}
\newcommand{\T}{\mathbb{T}}
\newcommand{\Oo}{\mathcal{O}}
\newcommand{\Z}{\mathbb{Z}}
\newcommand{\id}{{\rm id}\,}
\newcommand{\cl}{{\rm cl}\,}
\newcommand{\rk}{{\rm rank}\,}
\newcommand{\interior}{{\rm int}\,}
\newcommand{\Lie}{{\rm Lie}\,}
\newcommand{\Dom}{{\rm Dom}\,}
\newcommand{\codim}{{\rm codim}\,}
\newcommand{\Span}{{\rm Span}\,}
\title{Generic transitivity for couples of Hamiltonians}
\begin{document}

\begin{abstract}
We study orbits and reachable sets of generic couples of Hamiltonians $H_1, H_2$ on a symplectic manifold $N$. We prove that, $C^k$-generically for $k$ large enough, orbits coincide with the whole of $N$, and that the same is true for reachable sets when $N$ is compact.

Our results are stated in terms of a strong form of genericity which makes use of the notion of rectifiable subsets of positive codimension in Banach or Frechet spaces.

\end{abstract}

\author{Vito Mandorino \textsuperscript{*}}
\address{
\emph{\texttt{vito.mandorino@math.u-psud.fr}}}
\thanks{* Université Paris-Dauphine \& Université Paris-Sud
\smallskip}


\maketitle

\tableofcontents

All considered manifolds are separable, finite dimensional, 
 smooth, connected and without boundary, unless otherwise stated.

\section{Introduction}

In this paper we consider the switched system associated to a generic couple of Hamiltonians $H_1,H_2$ on a symplectic manifold $N$. 
 Our focus is to prove that orbits and reachable sets of such a system are generically  the whole of $N$.
  Our work is in the same spirit of a paper of Lobry \cite{Lob72} who proved that, for a $C^k$-generic couple of vector fields on a manifold $M$ and for $k$ large enough, orbits are the whole of $M$.

 We will come back later to our notion of genericity, which is given in terms of rectifiable subsets of positive codimension in a Banach or Frechet space and which is stronger than the usual one based on the Baire Category Theorem.
 Let us now quickly recall the definitions of orbit and reachable set.
  \medbreak
  
 Let us denote by $\{\phi^t_{H_1}\}_{t}$ and $\{\phi^t_{H_2}\}_{t}$ the   Hamiltonian (local) flows   of $H_1$ and $H_2$ respectively. 
   We assume that $H_1$ and $H_2$ are at least $C^2$.\ 
The \emph{orbit} $\Oo_{H_1,H_2}(z)\subseteq N$ of a point $z\in N$ through the switched system associated to $H_1$ and $H_2$ is obtained by applying to $z$ the group\footnote{This should be more properly called a pseudogroup, since the flows may not be complete.} generated by the two flows. More explicitly, 
 \[
 \Oo_{H_1,H_2}(z)=\left\{ \phi^{t_n}_{H_{i_n}}\circ\dots\circ\phi^{t_1}_{H_{i_1}}(z) :i_1,\dots,i_n\in\{1,2\},t_1,\dots,t_n\in\R,n\ge 1 \right\}.
 \]
 The \emph{reachable set} of $z$, denoted by $\Oo^+_{H_1,H_2}(z)$, represents instead the ``future'' of $z$, and is obtained by applying to $z$ the (pseudo-)semigroup generated by $\{\phi^t_{H_1}\}_{t>0}$ and $\{\phi^t_{H_2}\}_{t>0}$. More explicitly,
 \[
  \Oo^+_{H_1,H_2}(z)=\left\{ \phi^{t_n}_{H_{i_n}}\circ\dots\circ\phi^{t_1}_{H_{i_1}}(z) :i_1,\dots,i_n\in\{1,2\},t_1,\dots,t_n>0,n\ge 1 \right\}.
 \]
 By taking $t_1,\dots,t_n<0$ in the expression above one gets the analogous definition for the negative reachable set $\Oo^-_{H_1,H_2}(z)$, i.e.\ the ``past'' of $z$.

 When interested in orbits, we say that we are in the \emph{time-unoriented case}. We say that we are in the \emph{time-oriented case} when we are interested in reachable sets.
 \medbreak
 
 The main result of this paper is that for a $C^k$-generic couple $H_1,H_2$ and for $k$ large enough orbits are the whole of $N$, and the same is true for reachable sets provided that the manifold $N$ is compact. In fact, we prove something more accurate than $C^k$-genericity of couples: in a first respect, we adopt the notion of rectifiable set of positive codimension in Banach or Frechet spaces, which is a stronger notion\footnote{It is also stronger than some other notions of translational invariant ``smallness'' in infinite-dimensional spaces, such  as prevalence or Aronszajn-nullity, see \cite{Ber10}.} than genericity in the sense of Baire: indeed,  such a rectifiable set  is always Baire-meager whereas the viceversa is not always true  (see \cite{Ber10} for a detailed study; the basic facts and definitions are recalled in Section \ref{sect:thom}). In a second respect, we make perturbations just in $H_2$ leaving $H_1$ fixed (apart from a  small subset of highly degenerate $H_1$, see Assumption \ref{ass:H_1}  
  below).

Let us set $\dim N=2d$. Recall that, for each $k\in N$, the space $C^k(N)$ of $C^k$-real functions on $N$ is a Banach space when $N$ is compact, 
 and a Frechet space otherwise. Our main results are as follows:
 
 \begin{main}[Time-unoriented case]
 Let $H_1\in C^{4d+1}(N)$ satisfy the non-degeneracy assumption \ref{ass:H_1} below. Let $k\ge 4d$. Then, the set  
\[
\left\{ H_2\in C^{k}(N): \Oo_{H_1,H_2}(z)=N\ \forall\,z\in N \right\}
\]
has rectifiable complementary of codimension $\ge 1$ in $C^{k}(N)$. In particular, it is $C^k$-generic.
\end{main}

\begin{main}[Time-oriented case]
Assume that $N$ is compact. Let $H_1$ and $k$ be as before. Then, 
 the  set
\[
\left\{ H_2\in C^{k}(N): \Oo^+_{H_1,H_2}(z)=N\ \forall\,z\in N \right\}
\]
has rectifiable complementary of codimension $\ge 1$ in $C^{k}(N)$, and in particular it is $C^k$-generic.
\end{main}

 The two results above are proved in Section \ref{sect:autonomous} (Theorems \ref{theo:tu-aut} and \ref{theo:to-aut}). They extend quite naturally to time-dependent Hamiltonians as well, this is the content of the last Section \ref{sect:per}. 
 \medbreak
 
 The proofs make fundamentally use of three ingredients, namely the Rashevski-Chow Theorem, the Thom transversality Theorem and the Hamiltonian flow-box Theorem. The exact statements serving our purposes will be given in Section \ref{sect:preliminaries}.
  
  The Rashevski-Chow Theorem, as it is well-known, makes a link between the Lie algebra spanned by two vector fields at a point and the  orbit or reachable set of that point. The part concerning reachable sets is more precisely called Krener Theorem. In Section \ref{sect:chow} we recall some local and global versions of these theorems. In the global time-oriented version  we will make the additional assumption that the flows of the considered vector fields have non-wandering dynamics. Under the assumptions of Krener Theorem, this is a sufficient condition for  concluding that reachable sets are equal to the whole manifold.  On the other hand, every Hamiltonian flow on a compact symplectic manifold is non-wandering by Poincaré recurrence Theorem; this is the reason for the compactness assumption on $N$ in the statement of the Main Theorem 2.

 Concerning  the Thom Transversality Theorem, we use a refined version of the classical result which has been recently proved in \cite{BerMan12}. Such a version yields that the set of maps whose jet is transverse to a submanifold in a jet space not only is generic, but its complementary is rectifiable of positive codimension, which, as already mentioned, is a  stronger information. We recall the result in Section \ref{sect:thom} along with the notion of rectifiable set of positive codimension in Banach (or Frechet) spaces. 
 
 The last ingredient is the Hamiltonian flow-box Theorem, a normal form result which makes computation of iterated Lie (or Poisson) brackets handleable. The exact statement is recalled in Section \ref{sect:flowbox}.
  \medbreak
  
We finish the introduction by stating the non-degeneracy assumption required to $H_1$ in the main results above. Let us first give a definition.

\begin{defin}
\label{defin:codim}
Let $S\subseteq M$ be a subset of the manifold $M$. We say that $S$ has codimension $c$ in $M$, and we write $\codim_M S=c$, if
\[
S\subseteq \bigcup_{l\in\N}S^{l}
\]
for a countable family $\{S^{l}\}_l$ of $C^1$-submanifolds of codimension $\ge c$ in $M$.
\end{defin}

Note that, according to the above definition, a subset $S$ of codimension $c$ is also of codimension $c'$ for every $c'\le c$. In fact, in this paper we are interested in estimate the size of certain subsets, and only lower bounds (rather than sharp values) for codimension will matter.

\begin{ass}
\label{ass:H_1}
The Hamiltonian $H_1$ is continuously differentiable and the subset $N_0\subseteq N$ defined by
\[
N_0=\{ z\in N:d_z H_1=0 \}
\]
is contained in a countable union $\cup_{l\in\N} N^{l}$ of ${C^2}$-submanifolds of $N$ of codimension greater or equal than
 $\frac{\dim N}2 +1$.
 Given such a family\footnote{We will always assume that, given $H_1$ as above, the family $\{N^{l}\}_l$ has been chosen once for all. Such a choice will never play any role.} $\{N^l\}_l$ we define the tangent space $TN_0$ as
\[
TN_0=\bigcup_{l\in \N} TN^{l}.
\]
Note that each $TN^l$ is a submanifold of class $C^1$. According to Definition \ref{defin:codim} we have
\begin{align*}
\codim_N N_0&\ge \frac{\dim N}2 +1
\\
\codim_{TN} TN_0&\ge 2\,\codim_N N_0\ge \dim N+2.
\end{align*}
\end{ass}

The Assumption \ref{ass:H_1} is generically satisfied, by an immediate application of the classical Thom transversality Theorem.

\bigbreak

\textsc{Acknowledgements} - I would like to thank my thesis advisor Patrick Bernard for useful remarks and suggestions at different stages of this paper.

\section{Notation and preliminaries}
\label{sect:preliminaries}

\subsection{The Rashevski-Chow Theorem: time-unoriented and time-oriented versions}
\label{sect:chow}

We state here several versions of the Rashevski-Chow theorem for two (non-necessarily Hamiltonian) vector fields. For the proofs  we refer to the books \cite{Jur97,AgrSac04}. The proofs in these references are sometimes given for smooth vector fields, but they hold unchanged in the $C^k$ case, $k\ge 1$.

Let us first establish some notation. Given two $C^{k}$ vector fields $X_1,X_2$ ($k\ge 1$) on a manifold $M$, the orbit $\Oo_{X_1,X_2}(z)$ and the reachable set $\Oo^+_{X_1,X_2}(z)$ of a point $z\in M$ are defined as in the introduction, with the (local) flows of $X_1$ and $X_2$ playing the role of the Hamiltonian flows of $H_1$ and $H_2$ of the introduction.

We denote by $\Lie^{k}(X_1,X_2)$ the vector space  spanned by the vector fields
\[
X_1,\quad X_2,\quad[X_{i_0},[X_{i_1},[\dots,[X_{i_{m-1}},X_{i_{m}}],\dots]]],\qquad 1\le m\le k,\ i_0,i_1,\dots,i_m\in\{1,2\}.
\]
where $[\cdot,\cdot]$ denotes the usual bracket of vector fields. We also denote by $\Lie^{k}_1(X_1,X_2)$ the vector space obtained by bracketing just with $X_1$, i.e.~the span of the $k+2$ vector fields
\[
X_1,\quad X_2,\quad \underbrace{[X_{1},[X_{1},[\dots,[X_{1}}_\text{$m$ times},X_{2}]\dots]]],\qquad 1\le m\le k.
\]
We obviously have
\[
\Lie^{k}_1(X_1,X_2)\subseteq \Lie^k(X_1,X_2).
\]
Evaluating these vector spaces at a point $z\in M$ yields a vector subspace of $T_z M$: we will use the notation
\[
\Lie^{k}_1(X_1,X_2)(z),\quad \Lie^k(X_1,X_2)(z)
\]
to denote these vector subspaces.  

 For two Hamiltonians $H_1,H_2$ defined on a symplectic manifold $(N,\omega)$, let us recall the basic identity
\[
X_{\{H_1,H_2\}}=[X_{H_1},X_{H_2}]
\]
where $\{\cdot,\cdot\}$ is the usual Poisson bracket of functions and $X_{H_1}, X_{H_2}$ are the Hamiltonian vector fields of $H_1,H_2$ defined by the usual formula, valid for any function $H$,
\[
\iota_{X_H}\omega=dH.
\]
In the next sections we will tacitly use the following immediate consequence of the formulas above and of the non-degeneracy of the symplectic form: for each $z\in N$,
\begin{multline*}
\Lie^{k}_1(X_{H_1},X_{H_2})(z)=T_z N 
\\
\Leftrightarrow \Span_{T^*_z N} \Big\{ d_z H_1,d_z H_2, d_z \{H_1,H_2\},\dots, d_z \underbrace{\{H_{1},\{H_{1},\{\dots,\{H_{1}}_\text{$k$ times},H_{2}\}\dots\}\}\} \Big\}=T^*_z N.
\end{multline*}
and of course in this case we also have $\Lie^{k}(X_{H_1},X_{H_2})(z)=T_z N 
$.

The part (ii) of the following statement is also known as Krener's Theorem.

\begin{theo}[Local Rashevski-Chow Theorem]
\label{theo:chow-rash local}
Let $X_1,X_2$ be two $C^k$ vector fields \textup{(}$k\ge 1$\textup{)} on the manifold $M$. Let $z\in M$.
\begin{itemize}
\item[(i)] (time-unoriented case) 
If $\Lie^{k-1} (X_1,X_2)(z)=T_z M$, then $z$ is contained in the interior of its orbit $\Oo_{X_1,X_2}(z)$: 
\[
z\in\interior\Oo_{X_1,X_2}(z).
\]
\item[(ii)] (time-oriented case) 
If $\Lie^{k-1}(X_1,X_2)(z)=T_z M$, then $z$ is contained in the closure of the interior of its positive reachable set $\Oo^+_{X_1,X_2}(z)$, as well as in the closure of the interior of its negative reachable set $\Oo^-_{X_1,X_2}(z)$:
\[
z\in\cl\interior\Oo^+_{X_1,X_2}(z)\cap\cl\interior\Oo^-_{X_1,X_2}(z).
\]
\end{itemize}
\end{theo}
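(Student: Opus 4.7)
The plan is to realize every element of $\Lie^{k-1}(X_1,X_2)(z)$ as a one-sided derivative at the origin of a curve obtained by composing flows of $X_1$ and $X_2$, and then to invoke a weighted inverse function theorem to produce a local parametrization by $\dim M$ real parameters of a neighbourhood of $z$ contained in the orbit, respectively the reachable set.

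The cornerstone is the flow-commutator identity: for $C^2$ vector fields $Y,Z$,
\[
C_{Y,Z}(t):=\phi^{-t}_Z\circ\phi^{-t}_Y\circ\phi^{t}_Z\circ\phi^{t}_Y,\qquad C_{Y,Z}(t)(z)=z+t^2[Y,Z](z)+O(t^3),
\]
so that the reparametrization $s\mapsto C_{Y,Z}(\sqrt{s})(z)$ admits a one-sided derivative equal to $[Y,Z](z)$ at $s=0^+$. Iterating this scheme, for every iterated bracket $Y\in\Lie^{k-1}(X_1,X_2)$ one produces an explicit composition of flows $F_Y$ of $X_1,X_2$ such that $F_Y(t)(z)=z+t^{m_Y}Y(z)+o(t^{m_Y})$ for some integer $m_Y\ge 1$; the assumption that $X_1,X_2$ are of class $C^k$ is exactly what is needed to make these expansions meaningful up to bracket depth $k-1$. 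Selecting brackets $Y_1,\dots,Y_n$ in $\Lie^{k-1}(X_1,X_2)$ that span $T_zM$ (with $n=\dim M$), the concatenation
\[
\Phi(s_1,\dots,s_n)=F_{Y_n}\bigl(|s_n|^{1/m_{Y_n}}\bigr)\circ\dots\circ F_{Y_1}\bigl(|s_1|^{1/m_{Y_1}}\bigr)(z)
\]
is continuous near the origin and $C^1$ along rays, with one-sided radial derivatives $Y_i(z)$ at $s=0$. A ball-box open mapping argument (see \cite{AgrSac04}) then shows that $\Phi$ covers a neighbourhood of $z$, which proves (i) once one restricts all the $s_i$ to a common sign, thereby selecting the signs of the times in the underlying flow compositions.

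For the time-oriented statement (ii) the sole obstruction is that the flow-commutators underlying $F_Y$ use negative times. The standard remedy is to fix a small parameter $T>0$ and to replace every factor $\phi^{-s}_{X_i}$ appearing inside $F_Y$ by $\phi^{T-s}_{X_i}$; this amounts to post-composing the previous construction with a fixed diffeomorphism $\Psi_T$, built from strictly positive-time flows of $X_1,X_2$, which sends $z$ to a point $z_T\neq z$. All times now being positive, the open neighbourhood produced lies inside $\Oo^+_{X_1,X_2}(z)$ and contains $z_T$. As $T\to 0^+$, $\Psi_T\to\id$ uniformly on compact sets and $z_T\to z$, whence $z\in\cl\interior\Oo^+_{X_1,X_2}(z)$; the analogous claim about $\Oo^-_{X_1,X_2}(z)$ follows by the symmetric argument with a negative reference time. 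The main technical hurdle in this plan is the ball-box open mapping step itself: since $\Phi$ is only H\"older continuous at the origin, the classical inverse function theorem does not apply, and the surjectivity of its one-sided radial differential has to be converted into genuine local surjectivity through uniform estimates on the remainders $o(t^{m_Y})$ in weighted (non-isotropic) coordinates.
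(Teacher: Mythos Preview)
The paper does not supply its own proof of this theorem: it states the result and defers to \cite{Jur97,AgrSac04}, remarking only that the arguments there carry over unchanged from the smooth to the $C^k$ setting. So the comparison is against those references rather than against anything in the paper itself.

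Your route for (i)---realising iterated brackets as leading terms of flow commutators and then invoking a weighted open-mapping principle---is a legitimate and well-known strategy, closer in spirit to sub-Riemannian ball--box estimates than to the primary argument in \cite{AgrSac04,Jur97}. Those references favour an inductive construction: starting from $z$, one builds a nested chain of submanifolds $S_1\subset S_2\subset\cdots$ inside the orbit by observing that whenever $\dim S_k<\dim M$ the bracket hypothesis forces some $X_i$ to be transverse to $S_k$ at some point, and flowing along $X_i$ increases the dimension by one. That argument uses only the ordinary inverse function theorem at each step and entirely avoids the H\"older-at-the-origin difficulty you correctly flag as the main technical hurdle of your approach.

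For (ii) your reduction contains a genuine gap. Replacing each $\phi^{-s}_{X_i}$ by $\phi^{T-s}_{X_i}=\phi^{T}_{X_i}\circ\phi^{-s}_{X_i}$ inserts the extra $\phi^{T}_{X_i}$ factors \emph{in the middle} of the word, not at the end; the resulting map is not $\Psi_T\circ\Phi$ for any fixed $\Psi_T$. If one commutes the inserted factors to the left by conjugation, one obtains $\Psi_T\circ\Phi^{(T)}$, where $\Phi^{(T)}$ is the same commutator scheme applied to the pushed-forward fields $(\phi^{T}_{\bullet})_*X_i$ rather than to $X_1,X_2$. To conclude you would still need to show that $\Phi^{(T)}$ inherits the open-mapping property from $\Phi$ for small $T$, i.e.\ a stability statement for the already delicate ball--box step; this is missing from your write-up and is not automatic. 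The proofs in \cite{AgrSac04,Jur97} sidestep the issue: the inductive submanifold construction above works verbatim using only forward flows $\phi^{t}_{X_i}$ with $t>0$, and can be carried out inside any prescribed neighbourhood of $z$, yielding $z\in\cl\interior\Oo^{+}_{X_1,X_2}(z)$ directly.
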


We are  now going to state the global counterpart to the previous theorem. In the time-oriented case we shall make the additional assumption that $X_1$ and $X_2$ are complete vector fields and that all points of $M$ are non-wandering for both $X_1$ and $X_2$. Let us recall the precise definition.

\begin{defin}
Let $X$ be a complete $C^1$  vector field  on the manifold $M$ with flow $\{\phi^t_X\}_{t\in\R}$. A point $z\in M$ is said to be non-wandering for $X$ if for every neighborhood $U$ of $z$ and every $t>0$ there exists $t'>t$ such that
\[
\phi^{t'}_X(U)\cap U\neq\emptyset.
\]
\end{defin}

The set of non-wandering points is closed. Note that a point is non-wandering for $X$ if and only if it is non-wandering for $-X$. Note also that if the flow of $X$ preserve a measure of full support and if $M$ is compact then each point is non-wandering for $X$, by the Poincaré Recurrence Theorem. This is the case for an Hamiltonian vector field on a compact symplectic manifold.

\begin{theo}
\label{theo:chow-rash global}
Let $X_1,X_2$ be two $C^k$ vector fields \textup{(}$k\ge 1$\textup{)} on the manifold $M$.
\begin{itemize}
\item[(i)] (time-unoriented case)\quad If 
 every $z\in M$ satisfies
 \[
 z\in\interior \Oo_{X_1,X_2}(z)
 \]
 then
\[
\Oo_{X_1,X_2}(z)=M \qquad\forall\, z\in M.
\]
\item[(ii)] (time-oriented case)\quad Assume that $X_1$ and $X_2$ are complete and that all points of $M$ are non-wandering for both $X_1$ and $X_2$.
 If 
\begin{equation}
\label{eq:clint}
z\in\cl\interior\Oo^+_{X_1,X_2}(z)\cap\cl\interior\Oo^-_{X_1,X_2}(z) \qquad\forall\ z\in M
\end{equation}
then
\[
\Oo^+_{X_1,X_2}(z)
=M\quad\text{ and }\quad \Oo^-_{X_1,X_2}(z)
=M\qquad\forall\, z\in M.
\]
\end{itemize}
\end{theo}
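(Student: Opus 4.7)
The plan splits along the two cases. For the \emph{time-unoriented case} I would argue by connectedness. The hypothesis $z\in\interior\Oo_{X_1,X_2}(z)$ makes every orbit open: for any $w\in\Oo_{X_1,X_2}(z)$ one has $\Oo_{X_1,X_2}(w)=\Oo_{X_1,X_2}(z)$, and the assumption applied at $w$ places $w$ in the interior of this common set. Since distinct orbits are disjoint, the complement of any orbit is a union of other (open) orbits, so each orbit is clopen; connectedness of $M$ yields $\Oo_{X_1,X_2}(z)=M$.

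For the \emph{time-oriented case} I would proceed in three steps. \emph{Step 1.} I would first deduce $\Oo_{X_1,X_2}(z)=M$: the hypothesis (\ref{eq:clint}) forces $\interior\Oo^+_{X_1,X_2}(z)\neq\emptyset$, hence $\interior\Oo_{X_1,X_2}(z)\neq\emptyset$; since the orbit is invariant under the group of diffeomorphisms generated by the flows $\phi^t_{X_i}$ and this group acts transitively on it, transporting a neighborhood lying in the orbit by a suitable product of flows (sending a fixed interior point to an arbitrary orbit point) shows every orbit point is interior, and the connectedness argument of the first paragraph applies.

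\emph{Step 2.} Next I would prove $\cl\Oo^+_{X_1,X_2}(z)=M$. The closed set $S:=\cl\Oo^+_{X_1,X_2}(z)$ is automatically invariant under $\phi^t_{X_i}$ for $t>0$; the crucial claim is invariance under $\phi^{-t}_{X_i}$ for $t>0$ too, since then $S$ is invariant under the full group and therefore contains $\Oo_{X_1,X_2}(z)=M$. To prove invariance under $\phi^{-t}_{X_1}$, fix $w\in S$, $t>0$, and a neighborhood $V$ of $\phi^{-t}_{X_1}(w)$. The set $\phi^t_{X_1}(V)$ is a neighborhood of $w$, so it meets $\Oo^+_{X_1,X_2}(z)$ at some $r$; combining (\ref{eq:clint}) with $\Oo^+_{X_1,X_2}(r)\subseteq\Oo^+_{X_1,X_2}(z)$ gives $r\in\cl\interior\Oo^+_{X_1,X_2}(z)$, hence I may fix an open set $W\subseteq\phi^t_{X_1}(V)\cap\interior\Oo^+_{X_1,X_2}(z)$. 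Non-wandering of $X_1$ applied at a point of $W$ produces some $s>t$ and $p\in W$ with $\phi^s_{X_1}(p)\in W$; then $p\in\Oo^+_{X_1,X_2}(z)$ and $s-t>0$ yield $\phi^{s-t}_{X_1}(p)\in\Oo^+_{X_1,X_2}(z)$, while $\phi^s_{X_1}(p)\in\phi^t_{X_1}(V)$ forces $\phi^{s-t}_{X_1}(p)\in V$. The argument for $X_2$ is identical.

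\emph{Step 3.} For any $y\in M$ the hypothesis gives $\interior\Oo^-_{X_1,X_2}(y)\neq\emptyset$ and Step 2 makes $\Oo^+_{X_1,X_2}(z)$ dense, so I can pick $q\in\Oo^+_{X_1,X_2}(z)\cap\interior\Oo^-_{X_1,X_2}(y)$; since $q\in\Oo^-_{X_1,X_2}(y)$ one has $y\in\Oo^+_{X_1,X_2}(q)\subseteq\Oo^+_{X_1,X_2}(z)$. The statement for $\Oo^-$ follows by replacing $X_i$ with $-X_i$. The heart of the proof is Step 2, which trades the backward flow $\phi^{-t}_{X_1}$ for a long forward flow $\phi^{s-t}_{X_1}$: the non-wandering assumption is exactly what supplies the return time $s>t$, and (\ref{eq:clint}) is the refinement that allows non-wandering to be applied inside an open set $W$ that itself lies in the reachable set, so that the returning trajectory actually produces a point of $\Oo^+_{X_1,X_2}(z)$ close to $\phi^{-t}_{X_1}(w)$.
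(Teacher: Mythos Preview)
Your argument is correct and follows the same three-step architecture as the paper's proof of part (ii): first upgrade to $\Oo_{X_1,X_2}(z)=M$, then to $\cl\Oo^+_{X_1,X_2}(z)=M$, then to $\Oo^+_{X_1,X_2}(z)=M$. The difference is that the paper outsources each step to the literature (the Orbit Theorem for Step~1, Proposition~8.2 of \cite{AgrSac04} for Step~2, Corollary~8.1 of \cite{AgrSac04} for Step~3), whereas you prove all three from scratch. Your Step~1 is essentially the content of Remark~\ref{rem:orbit}; your Step~2 is precisely the mechanism behind Proposition~8.2 (trading $\phi^{-t}$ for $\phi^{s-t}$ via a non-wandering return), and your Step~3 is the standard density-meets-open-past argument underlying Corollary~8.1. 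The payoff of your version is a self-contained proof that makes transparent exactly where each hypothesis enters; the paper's version is shorter but opaque unless one has \cite{AgrSac04} at hand.
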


\begin{proof}
The part $(i)$ is an immediate consequence of the connectedness of $M$. The part $(ii)$ is a consequence of the Orbit Theorem (see \cite{AgrSac04} or \cite{Jur97}), and of Corollary 8.1 and Proposition 8.2   in \cite{AgrSac04} as we now quickly recall.
 Indeed, from the Orbit Theorem and the fact that $\Oo_{X_1,X_2}(z)$ has non-empty interior (due to assumption \eqref{eq:clint}), we get that $\Oo_{X_1,X_2}(z)$ is an open subset of $M$ for every $z$. By the part $(i)$ of Theorem \ref{theo:chow-rash global} we deduce
\[
\Oo_{X_1,X_2}(z)=M\qquad\forall\ z\in M.
\]
From Proposition 8.2 in \cite{AgrSac04} and the non-wandering assumption\footnote{In \cite{AgrSac04} the terminology `Poisson stable' rather than `non-wandering' is used.} on $X_1$ and $X_2$ we have
\[
\Oo_{X_1,X_2}(z)\subseteq\cl\Oo^+_{X_1,X_2}(z)\qquad\forall\ z\in M.
\]
Putting together the two above relations yields
\[
M=\cl\Oo^+_{X_1,X_2}(z)\qquad\forall\ z\in M.
\]
The conclusion now follows by Corollary 8.1 in \cite{AgrSac04}. 
\end{proof}

Note that Proposition 8.2 and Corollary 8.1 in \cite{AgrSac04} are therein stated under the assumption that $\Lie(X_1,X_2)(z)=T_z M$ for every $z\in M$, but the proofs hold unchanged under the weaker assumption \eqref{eq:clint}. We will need this slightly more general formulation in the sequel.

\begin{rem}
\label{rem:orbit}
{\rm 
We will use in the sequel the following basic fact: given two $C^1$ vector fields $X_1,X_2$  on $M$  and  $z\in M$, we have
\[
z\in\interior \Oo_{X_1,X_2}(z)\ \Leftrightarrow\ \interior\Oo_{X_1,X_2}(z)\neq\emptyset.
\]
Indeed, let us suppose that the orbit $\Oo_{X_1,X_2}(z)$ has non-empty interior and let us prove that it is open. By changing the point $z$ if necessary, we can suppose that $z$ belongs to the interior of $\Oo_{X_1,X_2}(z)$. Let now $z'$ be another point of $\Oo_{X_1,X_2}(z)$. We want to prove that $z'$ belongs to the interior of $\Oo_{X_1,X_2}(z)$ as well. By definition of orbit, there exists $\phi=\phi^{t_k}_{X_{i_k}}\circ\dots\circ \phi^{t_1}_{X_{i_1}}$
 for some $k\in\N,\, t_1,\dots,t_k\in\R,\, i_1,\dots,i_k\in\{1,2\},$ such that
 \[
 z\in\Dom\phi\quad\text{and}\quad\phi(z)=z'.
 \]
Since $\Dom\phi$ is open and $z\in\interior\Oo_{X_1,X_2}(z)$, the set $\Dom\phi\cap\Oo_{X_1,X_2}(z)$ is a neighborhood of $z$. Since $\phi$ is a local diffeomorphism, the set $\phi(\Dom\phi\cap\Oo_{X_1,X_2}(z))$ is a neighborhood of $z'$ contained in $\Oo_{X_1,X_2}(z)$. This proves that $z'\in\interior\Oo_{X_1,X_2}(z)$.
}
\end{rem}

\subsection{The Thom Transversality Theorem. Rectifiable sets of positive codimension}
\label{sect:thom}

In this section we recall from \cite{Ber10} the notion of rectifiable set of positive codimension in Banach and Frechet spaces. It is quite clear from the definitions below that such a set is automatically Baire-meager (i.e.~it is contained in a countable union of closed sets with empty interior, or, equivalently, its complementary is generic), but the viceversa is not true in general. 
 In this sense, the notion of rectifiable set of positive codimension is a stronger notion of ``smallness''  than the one of having generic complementary. 
 
 Let us first give the definition for Banach spaces. 
  We shall present later the extension to Frechet spaces.

\begin{defin}
The subset $A$ in the Banach space $F$ is a 
\emph{Lipschitz graph of codimension $c$} 
if there exists a splitting $F=E\oplus G$, with $\dim G=c$ and a Lipschitz
map $g:E\longrightarrow G$ such that 
\[
A\subseteq\{ x+g(x), x\in E\}.
\]

A subset $A\subseteq F$ is \emph{rectifiable of codimension $c\in\N$} if it is 
a countable union $A=\cup _n \varphi_n(A_n)$ where 
\begin{itemize}
 \item $\varphi_n:U_n\longrightarrow F$ is a Fredholm map \footnote{A Fredholm map of index $i$ between separable Banach spaces is a $C^1$ map such that the differential is Fredholm of index $i$ at every point (recall that the index is locally constant).}
  of index $i_n$ defined
on an open subset $U_n$ in a separable Banach space $F_n$.
\item $A_n\subseteq U_n$ is a Lipschitz graph of codimension $c+i_n$ in $F_n$.
\end{itemize}

Finally, a subset $A\subseteq F$ is \emph{rectifiable of positive codimension} if it is rectifiable of codimension $c\in\N$ for some $c\ge1$.
\end{defin}

In this paper, the Banach spaces under consideration will mostly be the spaces $C^k(M), k\in\N$, of  real functions of class $C^k$ on a compact manifold $M$.

We shall  occasionally consider the case of compact manifolds with boundary. More precisely, if $M$ is any such manifold, we will consider the space $C^k(M)$ defined as
\begin{multline*}
C^k(M)=\Big\{ f\colon M\to\R:  f \text{ is of class $C^k$ in $M\setminus \partial M$, $f$ is continuous up to the boundary and }
\\
\text{all partial derivatives of $f$ of order $\le k$ extend by continuity to the boundary}  \Big\}
\end{multline*}
The space $C^k(M)$ is a Banach space when it is endowed with the norm
\begin{equation}
\label{eq:Ckboundary}
\left\| f \right\|_{C^k(M)}= \max_{0\le|\alpha|\le k}\sup_{x\in M\setminus\partial M}\left| \partial_\alpha f(x) \right|
\end{equation}
the maximum being taken over all multi-indexes $\alpha=(\alpha_1,\dots,\alpha_m)\in\N^d$ of length $\le k$.  Here $m$ is the dimension of $M$.
For a function $f\in C^k(M)$, we will regard its $k$-jet $j^k f$ as a function from $M\setminus \partial M$ to $J^k(M\setminus\partial M,\R)$.

\medbreak

We will also consider the space $C^k(M)$ when $M$ is  a non-compact manifold without boundary. In this case $C^k(M)$ is no more a Banach space, it is however a Frechet space in the usual way, i.e.\ the Frechet topology is given by the family of seminorms $\left\{\|\cdot\|_{C^k(K_n)}\right\}_{n\in\N}$, where $K_n$ is any increasing sequence of compact sets exhausting $M$ and $\|\cdot\|_{C^k(K_n)}$ is defined as in \eqref{eq:Ckboundary}. In fact it is not restrictive to assume that each $K_n$ is a smooth manifold with boundary.

The definition of rectifiable subset of positive codimension extends to Frechet spaces as follows (cf.\ \cite[Section 3]{Ber10}, where a different terminology was used: we call `rectifiable' here what was called `countably rectifiable' there):

\begin{defin}
\label{def:frechet}
The subset $A$ of the Frechet space $F$ is rectifiable of codimension $c$ if it is a countable union $A=\cup_n A_n$ where each $A_n$ satisfies: there exists a Banach space $B_n$ and a continuous linear map $P_n\colon F\to B_n$ with dense range such that $P_n(A_n)$ is rectifiable of codimension $c$ in $B_n$. 
\end{defin}

Let us point out from \cite[Prop.~16]
{Ber10} the following useful compatibility property among the spaces $C^k(M)$ with respect to different values of $k$.

\begin{prop}
\label{prop:compatibility}
Let $M$ be a  manifold. If $A\subseteq C^k(M)$ is rectifiable of codimension $c$,
 and $k'\geq k$, then $A\cap C^{k'}(M)$
 is rectifiable of codimension $c$ in $C^{k'}(M)$.
\end{prop}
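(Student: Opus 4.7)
The plan is to exploit the canonical continuous linear inclusion $i \colon C^{k'}(M) \hookrightarrow C^{k}(M)$, which has dense image in both the Banach case (compact $M$) and the Fr\'echet case (noncompact $M$), since compactly supported smooth functions are dense in $C^k(M)$ in its natural topology and are contained in $C^{k'}(M)$. Using $i$, one pulls back the rectifiable structure of $A$ by composing with the Banach-valued ``test maps'' of Definition \ref{def:frechet}.

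First, I would apply Definition \ref{def:frechet} to $A$: write $A = \bigcup_n A_n$ where for each $n$ there exist a Banach space $B_n$ and a continuous linear $P_n \colon C^k(M) \to B_n$ with dense range such that $P_n(A_n)$ is rectifiable of codimension $c$ in $B_n$. When $M$ is compact, $C^k(M)$ is itself Banach and the statement reduces directly to the Banach definition of rectifiability; one can simply take $B_1 = C^k(M)$, $P_1 = \id$, and $A_1 = A$.

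Next, set $Q_n := P_n \circ i \colon C^{k'}(M) \to B_n$. This is continuous linear, and its range $P_n(i(C^{k'}(M)))$ is dense in $B_n$, since $i(C^{k'}(M))$ is dense in $C^k(M)$ and $P_n$ is continuous with dense range. Decompose $A \cap C^{k'}(M) = \bigcup_n \bigl(A_n \cap C^{k'}(M)\bigr)$. Since $Q_n(A_n \cap C^{k'}(M)) \subseteq P_n(A_n)$, the crux is the elementary fact that a subset of a set that is rectifiable of codimension $c$ in a Banach space is itself rectifiable of codimension $c$. This is immediate from the Banach definition: if $X = \bigcup_m \varphi_m(L_m)$ is a Fredholm decomposition of a rectifiable set and $X' \subseteq X$, then $X' = \bigcup_m \varphi_m\bigl(L_m \cap \varphi_m^{-1}(X')\bigr)$, and each $L_m \cap \varphi_m^{-1}(X')$ is still contained in the same Lipschitz graph that carried $L_m$, hence is itself a Lipschitz graph of codimension $c + \text{ind}\,\varphi_m$. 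Applied to $Q_n(A_n \cap C^{k'}(M)) \subseteq P_n(A_n)$, this shows that $Q_n(A_n \cap C^{k'}(M))$ is rectifiable of codimension $c$ in $B_n$, so the data $(B_n, Q_n)_n$ witnesses via Definition \ref{def:frechet} that $A \cap C^{k'}(M)$ is rectifiable of codimension $c$ in $C^{k'}(M)$. In the compact case one also recovers the Banach-sense statement by taking $B = C^{k'}(M)$ and $P = \id$.

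The only real obstacle is bookkeeping between the Banach and Fr\'echet versions of rectifiability and verifying that the ``subset'' lemma does apply cleanly inside each Banach space $B_n$; the substantive input is the density of the inclusion $i$, after which the argument reduces to a mechanical composition.
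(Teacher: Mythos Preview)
The paper does not prove this proposition; it simply quotes it from \cite[Prop.~16]{Ber10}. So there is no in-paper argument to compare against, only the question of whether your proof stands on its own.

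Your argument via the dense inclusion $i\colon C^{k'}(M)\hookrightarrow C^k(M)$ is clean and correct as far as the Fr\'echet definition (Definition~\ref{def:frechet}) is concerned: the composition $Q_n=P_n\circ i$ is continuous linear with dense range (because $P_n$ is continuous and $i$ has dense image, so $P_n(i(C^{k'}))$ is dense in $P_n(C^k)$, which is dense in $B_n$), and the ``subset of a Banach-rectifiable set is Banach-rectifiable'' observation is exactly right. This settles the noncompact case completely.

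The gap is in the compact case. When $M$ is compact, both $C^k(M)$ and $C^{k'}(M)$ are Banach spaces, and ``rectifiable'' in the statement means the \emph{Banach} notion (Fredholm images of Lipschitz graphs). Your argument, however, only produces a continuous linear map $i\colon C^{k'}(M)\to C^k(M)$ with dense range under which $A\cap C^{k'}(M)$ lands in a Banach-rectifiable set---that is, it verifies the Fr\'echet definition for $A\cap C^{k'}(M)$, not the Banach one. Your final sentence (``take $B=C^{k'}(M)$ and $P=\id$'') is circular: using $P=\id$ in Definition~\ref{def:frechet} would require already knowing that $A\cap C^{k'}(M)$ is Banach-rectifiable in $C^{k'}(M)$, which is the conclusion. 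The inclusion $i$ is not Fredholm (its image is dense but not closed), so you cannot simply pull back the Fredholm/Lipschitz-graph structure through it. The equivalence of the Fr\'echet and Banach notions on a Banach space, or a direct argument for Banach-rectifiability of $A\cap C^{k'}(M)$, requires genuine additional work; this is part of the machinery developed in \cite{Ber10} and is not a formal consequence of the definitions reproduced in this paper.
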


The next theorem can be seen as a finer version of the ``avoiding case'' of the classical Thom transversality Theorem in jet spaces. 

\begin{theo}
\label{theo:thom}
Let $M$ be a manifold, and $W\subseteq J^k(M,\R)$ a $C^1$-submanifold such that
\[
\codim W\ge \dim M+1.
\]
Then, for every $r\ge 1$ the set
\[
\left\{f\in C^{k+r}(M):j^k f(M)\cap W\neq \emptyset  \right\}
\]
is rectifiable of codimension equal to $\codim W-\dim M\ge1$ in  $C^{k+r}(M)$.
\end{theo}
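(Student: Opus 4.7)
The plan is to realize the exceptional set as the image, under a Fredholm projection, of a suitable Banach submanifold whose codimension equals $\codim W$, and then to read off the conclusion directly from the definition of rectifiable set. First I would reduce to a local situation by covering $M$ with countably many relatively compact open charts $U_\ell$: since a countable union of rectifiable sets of codimension $c$ is again rectifiable of codimension $c$, it suffices to prove that each
\[
A_\ell = \{ f \in C^{k+r}(M) : j^k f(U_\ell) \cap W \neq \emptyset \}
\]
is rectifiable of codimension $c := \codim W - \dim M$ in $C^{k+r}(M)$.

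The key object is the evaluation map
\[
\Psi : C^{k+r}(M) \times U_\ell \to J^k(M, \R), \qquad (f, x) \mapsto j^k f(x),
\]
which is of class $C^1$ thanks to the hypothesis $r \ge 1$ (differentiating in $x$ costs one extra derivative of $f$, while the dependence on $f$ is linear). A direct computation shows that $\Psi$ is a submersion: for fixed $x_0$, any prescribed $k$-jet at $x_0$ can be realized by adding to $f_0$ a polynomial multiplied by a cut-off supported near $x_0$, and varying $x$ covers the base direction of $J^k(M,\R)$. By the implicit function theorem in Banach spaces, $\Psi^{-1}(W)$ is then a $C^1$ Banach submanifold of $C^{k+r}(M) \times U_\ell$ of codimension exactly $\codim W$, and $A_\ell$ is its image under the natural projection $\pi: C^{k+r}(M) \times U_\ell \to C^{k+r}(M)$, which is Fredholm of index $\dim M$ since the fiber $U_\ell$ is finite-dimensional.

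I would then cover $\Psi^{-1}(W)$ by countably many patches $V_n$, each realized locally via the implicit function theorem as a $C^1$ (hence, after further subdivision, genuinely Lipschitz) graph of codimension $\codim W$ in the ambient Banach space $F_n = C^{k+r}(M) \times \R^{\dim M}$, after trivializing $U_\ell$ and choosing a $\codim W$-dimensional complementary direction via the surjectivity of $d\Psi$. Taking $\varphi_n := \pi|_{F_n}$, which is Fredholm of index $i_n = \dim M$, the identity
\[
c + i_n = (\codim W - \dim M) + \dim M = \codim W
\]
matches exactly the requirement that each $V_n$ be a Lipschitz graph of codimension $c + i_n$ in $F_n$. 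Thus $A_\ell = \bigcup_n \varphi_n(V_n)$ is rectifiable of codimension $c$ in $C^{k+r}(M)$.

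When $M$ is non-compact, so that $C^{k+r}(M)$ is only a Frechet space, I would invoke Definition \ref{def:frechet}: exhausting $M$ by compact submanifolds with boundary $K_m$, the continuous linear restriction maps $C^{k+r}(M) \to C^{k+r}(K_m)$ have dense range, and the corresponding exceptional set inside each $C^{k+r}(K_m)$ is rectifiable by the Banach case already established. The main obstacle I foresee is the careful parametrization of $\Psi^{-1}(W)$ by Lipschitz graphs with precisely the right codimension, and the bookkeeping that matches this codimension to the Fredholm index of the projection, since both must line up exactly with the definition of rectifiable set; the submersion character of $\Psi$ and the Fredholm index of $\pi$ are classical once this has been set up.
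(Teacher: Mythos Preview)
Your proposal is correct and follows essentially the same route as the paper: the compact case is handled by the evaluation map $\Psi(f,x)=j^k f(x)$, pulling back $W$ to a finite-codimension Banach submanifold and projecting via the Fredholm map $\pi$ of index $\dim M$ (this is precisely the content of \cite[Prop.~7]{BerMan12}, which the paper cites rather than reproves), and the non-compact case is reduced to the compact one via an exhaustion and Definition~\ref{def:frechet} exactly as you describe. The one point you rightly flag as delicate---covering the $C^1$ submanifold $\Psi^{-1}(W)$ by countably many Lipschitz graphs with a fixed splitting---goes through because $C^{k+r}(M)$ is separable for compact $M$ and a $C^1$ graph has locally bounded derivative, hence is Lipschitz on small balls.
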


\begin{proof}
Let $B=\{z\in\R^d:|z|<1\}$ be the open unit disc in $\R^d$, and let $W$ be a $C^1$ submanifold of $J^k(B,\R)$ with $\codim W\ge d+1$. In  \cite[Prop.\ 7]{BerMan12} it is proved that  for every $r\ge1$ the set 
\[
\left\{f\in C^{k+r}(\bar B):j^k f(B)\cap W\neq\emptyset\right\}
\]
is rectifiable of codimension equal to $\codim W-d$ in the Banach space $C^{k+r}(\bar B)$   defined according to  \eqref{eq:Ckboundary}.

The proof given in \cite{BerMan12}  actually yields, with essentially no modifications, the following more general result: if $M$ is a compact manifold with or without boundary, and $W$ is a $C^1$ submanifold of $J^k(M\setminus\partial M,\R)$ with $\codim W\ge\dim M+1$, then for every $r\ge 1$ the set
\begin{equation}
\label{eq:withboundary}
\left\{f\in C^{k+r}(M):j^k f(M\setminus\partial M)\cap W\neq\emptyset\right\}
\end{equation}
is rectifiable of codimension $\codim W-\dim M$ in the Banach space $C^k(M)$.  This proves the present statement in the case of compact $M$.

In order to end the proof of the theorem it remains to consider the non-compact case. Let $M$ be a non-compact manifold (without boundary), and call $A\subseteq C^{k+r}(M)$ the subset in the statement, namely
\[
A=\left\{f\in C^{k+r}(M):j^k f(M)\cap W\neq \emptyset  \right\}.
\]
Let $K_n\subseteq M, n\in\N$, be a sequence of smooth compact sets exhausting $M$. Each of them is a smooth manifold with boundary. Call $P_n\colon C^k(M)\to C^k(K_n)$ the natural projection. Note that the space $J^k(K_n\setminus \partial K_n,\R)$ is naturally included in $J^k(M,\R)$, and that $A=\cup_n A_n$ where
\[
A_n=P_n^{-1}\Big(\left\{ f\in C^{k+r}(K_n):j^k f(K_n\setminus\partial K_n)\cap W\neq \emptyset    \right\}\Big).
\]
For each $n$ the set
\[
\left\{ f\in C^{k+r}(K_n):j^k f(K_n\setminus\partial K_n)\cap W\neq \emptyset    \right\}
\]
is rectifiable of codimension $\codim W-\dim M$ in $C^{k+r}(K_n)$, because we are in the same situation of the set in \eqref{eq:withboundary}. Moreover, each $P_n$ is a surjective (cf.\ \cite{Hes41,See64,Bie80} for more general statements) continuous linear operator from the Frechet space $C^k(M)$ to the Banach space $C^k(K_n)$   and,  \emph{a fortiori}, it has dense range. Hence, by Definition \ref{def:frechet}, the set $A$ is rectifiable of codimension $\codim W-\dim M$ in the Frechet space $C^k(M)$, as desired.
\end{proof}

Clearly, the above theorem still holds true if one replaces $W$ by a countable union of $C^1$ submanifolds of codimension greater or equal than $\dim M+1$. In fact it is still true when $W$ is a rectifiable set of codimension greater or equal than $\dim M+1$, this also follows from \cite[Prop.\ 7]{BerMan12}.

\subsection{The Hamiltonian Flow-Box Theorem}
\label{sect:flowbox}

\begin{theo}
\label{theo:ham flowbox}
Let $(N,\omega)$ be a symplectic $2d$-dimensional manifold, and $H\colon N\to \R$ be a $C^k$ function, $k\ge 2$. Let $z\in N$ be such that $d_{z} H\neq 0$. Then, there exist a $C^{k-1}$ chart $\psi\colon U\to\R^{2d}$ defined in a neighborhood $U$ of $z$ such that  
\[
H\circ \psi^{-1}(x,p)=p_1\qquad\text{and}\qquad (\psi^{-1})^* \omega_0=\omega
\]
where $\omega_0$ is the standard symplectic form on $\R^{2d}$ and $(x,p)=(x_1,\dots,x_d,p_1,\dots,p_d)$ are the associated Darboux coordinates.
\end{theo}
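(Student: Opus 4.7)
The plan is to construct the chart by a Darboux--Carathéodory style argument, taking $H$ itself as the conjugate momentum $p_1$ and then extending via commuting Hamiltonian flows.

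First, since $d_zH\neq 0$, the Hamiltonian vector field $X_H$ does not vanish at $z$. I would pick a smooth hypersurface $\Sigma_0$ through $z$ transverse to $X_H$, and define a function $x_1$ near $z$ by declaring $x_1\equiv 0$ on $\Sigma_0$ and $x_1(\phi^t_{X_H}(q))=t$ for $q\in\Sigma_0$ and $t$ small. This yields a $C^{k-1}$ function with $X_H(x_1)=1$, equivalently $\{H,x_1\}=1$. Since this Poisson bracket is locally constant, $[X_H,X_{x_1}]=\pm X_{\{H,x_1\}}=0$, so the two Hamiltonian flows commute near $z$.

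Next, consider the codimension-$2$ local submanifold $S=\{H=H(z),\,x_1=0\}$ through $z$. The plane $V_z=\Span(X_H(z),X_{x_1}(z))$ is symplectic because $\omega(X_H,X_{x_1})=X_{x_1}(H)=\{x_1,H\}=-1\neq 0$; moreover $T_zS=V_z^\omega$. Since a $2$-plane is symplectic iff its $\omega$-orthogonal is, $\omega|_S$ is non-degenerate near $z$. Applying the classical Darboux theorem to $(S,\omega|_S)$ provides $C^{k-1}$ coordinates $(x_2,\dots,x_d,p_2,\dots,p_d)$ on $S$ near $z$ with $\omega|_S=\sum_{i\ge 2}dp_i\wedge dx_i$. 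Writing $\iota$ for the corresponding parametrization of $S$, I would then define the sought-after chart by
\[
\psi^{-1}(x_1,\dots,x_d,p_1,\dots,p_d)=\phi^{x_1}_{X_H}\circ\phi^{H(z)-p_1}_{X_{x_1}}\bigl(\iota(x_2,\dots,x_d,p_2,\dots,p_d)\bigr),
\]
which is a $C^{k-1}$ local diffeomorphism around $z$ because the flows $\phi^t_{X_H}$ and $\phi^s_{X_{x_1}}$ commute and are of class $C^{k-1}$.

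It remains to verify the two identities. The equality $H\circ\psi^{-1}=p_1$ follows at once from $X_H(H)=0$ and $X_{x_1}(H)=-1$: flowing along $X_{x_1}$ for time $H(z)-p_1$ drops $H$ from $H(z)$ to $p_1$, while the subsequent $X_H$-flow leaves $H$ unchanged. For the symplectic identity $(\psi^{-1})^*\omega_0=\omega$, note that $\partial_{x_1}\psi^{-1}=X_H$ and $\partial_{p_1}\psi^{-1}=-X_{x_1}$ everywhere on the chart, so
\[
\omega(\partial_{x_1},\partial_{p_1})=-\omega(X_H,X_{x_1})=1,\qquad\omega(\partial_{x_1},\partial_{x_i})=dH(\partial_{x_i})=0
\]
(and similarly for $\partial_{p_i}$, $i\ge 2$) at points of $S$, simply because $\partial_{x_i},\partial_{p_i}\in TS\subset V^\omega$. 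On $S$ itself the remaining components of $\omega$ in the chart are fixed by the Darboux choice. The extension off $S$ is handled by the invariance $(\phi^{t}_{X_H})^*\omega=\omega=(\phi^s_{X_{x_1}})^*\omega$, which propagates each of the above identities along the commuting flows to the whole neighborhood of $z$.

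The main delicate step is this last symplectic bookkeeping: one must check that the prescribed normal form of $\omega$ on $S$ is transported faithfully by the two flows to give the full standard form on the chart, and this uses in an essential way both the commutation $[X_H,X_{x_1}]=0$ and the symplecticity of Hamiltonian flows. The regularity loss $C^{k}\to C^{k-1}$ is dictated by the regularity of the flow of $X_H$.
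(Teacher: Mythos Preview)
Your argument is the standard Carath\'eodory--Darboux construction and is essentially correct. The paper itself does not give an independent proof: it simply refers to Abraham--Marsden \cite{AbrMar78}, adding only the remark that the chart is $C^{k-1}$ because it is built from the flow of $X_H$. So you are supplying what the paper merely cites.

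One point deserves more care, however. You assert that the flow $\phi^s_{X_{x_1}}$ is of class $C^{k-1}$, but since $x_1$ is obtained from the flow of $X_H$ it is only $C^{k-1}$, hence $X_{x_1}$ is a priori only $C^{k-2}$, and a reader will expect its flow --- and therefore your chart --- to be $C^{k-2}$. For $k=2$ this would be fatal. The claim is nonetheless true, for the following reason: in your construction $\phi^s_{X_{x_1}}$ is only applied to points of the smooth hypersurface $\Sigma_0$, to which $X_{x_1}$ is tangent (because $X_{x_1}(x_1)=0$). Restricted to $\Sigma_0$, the field $X_{x_1}$ is the unique section of the characteristic line bundle $\ker(\omega|_{\Sigma_0})$ normalized by $dH(X_{x_1})=-1$; since that line bundle is smooth and $dH$ is $C^{k-1}$, the restriction $X_{x_1}|_{\Sigma_0}$ is $C^{k-1}$ and so is its flow inside $\Sigma_0$. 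This recovers the stated $C^{k-1}$ regularity for $\psi$, but you should make the argument explicit rather than assert it. (Your Poisson-bracket signs are not fully consistent with one another, though the conclusions you draw from them are correct.)
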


\begin{proof}
We refer to \cite[Theorem 5.2.19]{AbrMar78}, where in fact the function $H$ is assumed to be smooth. However, the diffeomorphism $\psi$ is therein constructed using the flow of $H$, which is $C^{k-1}$ if $H$ is $C^k$.
\end{proof}

\section{The autonomous case}
\label{sect:autonomous}

In this section we prove the main results presented in the introduction. For the convenience of the reader, we restate them as Theorem \ref{theo:tu-aut} and Theorem \ref{theo:to-aut} below.

\begin{theo}[Time-unoriented, autonomous case]
\label{theo:tu-aut}
Let $N$ be a symplectic manifold of dimension $2d$, and let $H_1\in C^{4d+1}(N)$ satisfy the non-degeneracy assumption $\eqref{ass:H_1}$. Then, for every $k\ge 4d$, the set 
\[
\left\{ H_2\in C^{k}(N): \Oo_{H_1,H_2}(z)=N\ \forall\,z\in N \right\}
\]
has rectifiable complementary of codimension $\ge 1$ in $C^{k}(N)$. 
\end{theo}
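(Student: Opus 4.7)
The plan is to combine the global Rashevski-Chow theorem (Theorem \ref{theo:chow-rash global}(i)) with two Thom transversality arguments on the space of $H_2$'s, exploiting the flow-box normal form for $H_1$. By Theorem \ref{theo:chow-rash global}(i) and Remark \ref{rem:orbit}, it suffices to exhibit, for $H_2$ outside a rectifiable set of positive codimension in $C^k(N)$, a point $z'\in\Oo_{H_1,H_2}(z)$ of every orbit at which $\Lie_1(X_{H_1},X_{H_2})(z')=T_{z'}N$: indeed the local Rashevski-Chow theorem \ref{theo:chow-rash local}(i) then places $z'$ in $\interior\Oo(z')=\interior\Oo(z)$, so every orbit has non-empty interior and is therefore equal to $N$ by \ref{theo:chow-rash global}(i). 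I will enforce this via two independent transversality conditions on $H_2$, treating $N\setminus N_0$ and $N_0$ separately.

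For the open set $N\setminus N_0$: at any $z_0\in N\setminus N_0$, the Hamiltonian flow-box Theorem \ref{theo:ham flowbox} produces Darboux coordinates $(x,p)$ in which $H_1=p_1$, and iterated Poisson brackets reduce to $\{H_1,\{H_1,\ldots,\{H_1,H_2\}\ldots\}\}=(-1)^m\partial_{x_1}^m H_2$. Spanning of $\Lie_1$ at $z$ is then equivalent to full rank $2d$ of the family of covectors
\[
dp_1,\ dH_2,\ d(\partial_{x_1}H_2),\ \ldots,\ d(\partial_{x_1}^{m}H_2),
\]
which for $m=k-1$ amounts to rank-deficiency of a $2d\times(k+1)$ matrix --- a stratified condition of codimension $\ge k-2d+2\ge 2d+2\ge\dim N+2$ in the jet fibre whenever $k\ge 4d$. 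The smoothness hypothesis $H_1\in C^{4d+1}$ is exactly what makes the defining equations $C^1$ (since $dF_{k-1}$ involves derivatives of $H_1$ up to order $k$), so the bad subset $W_1\subseteq J^k(N\setminus N_0,\R)$ is a countable union of $C^1$ submanifolds of the claimed codimension, and Theorem \ref{theo:thom} yields a rectifiable bad set of codimension $\ge 1$ in $C^k(N)$.

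For $z\in N_0$: I enforce $X_{H_2}(z)\notin T_z N_0$ for every such $z$. Since the fiberwise-linear bundle map $\alpha\colon J^1(N,\R)\to TN$, $(z,v,p)\mapsto(z,\omega_z^{-1}(p))$, sending $j^1 H_2$ to $X_{H_2}$, is a submersion, Assumption \ref{ass:H_1} implies that $\alpha^{-1}(TN_0)\subseteq J^1(N,\R)$ is a countable union of $C^1$ submanifolds of codimension $\ge 2d+2$, and Theorem \ref{theo:thom} yields a second rectifiable bad set of codimension $\ge 2$ in $C^k(N)$. For $H_2$ outside the union of both bad sets and any $z\in N_0$, a Baire argument rules out that the trajectory $\gamma(t)=\phi^t_{H_2}(z)$ stay in $N_0=\bigcup_l N^l$: otherwise the countable cover $[0,\eps]=\bigcup_l\gamma^{-1}(N^l)$ of a complete metric space by relatively closed sets forces some $\gamma^{-1}(N^l)$ to contain an interval $I$, giving $\dot\gamma(t)=X_{H_2}(\gamma(t))\in T_{\gamma(t)}N^l$ on $I$ and contradicting the transversality just imposed. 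Hence every orbit reaches $N\setminus N_0$, where by the first step $\Lie_1$ spans, and the opening reduction closes the argument.

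The principal subtlety, where I expect the main technical care, is this ``escape from $N_0$'' step: since $N_0$ is only a \emph{countable} union of strata, the flow of $X_{H_2}$ could a priori hop between different $N^l$'s while remaining in $N_0$, and the Baire argument combined with the pointwise transversality $X_{H_2}(z)\notin T_z N_0$ is what definitively rules this out. The other delicate bookkeeping is verifying that the rank-deficiency locus $W_1$ is indeed a countable union of $C^1$ submanifolds at the required codimension, which is where both the flow-box normal form and the extra derivative in $H_1\in C^{4d+1}$ become indispensable.
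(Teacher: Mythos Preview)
Your approach coincides with the paper's: decompose $N = N' \cup N_0$; on $N'$ use the flow-box normal form to recast the spanning of $\Lie_1$ as a rank-deficiency locus in jet space and invoke the quantitative Thom transversality theorem; on $N_0$ impose $X_{H_2} \notin TN_0$ via a second transversality argument and use Baire to force every orbit out of $N_0$; then conclude with the local and global Rashevski--Chow theorems together with Remark~\ref{rem:orbit}. The paper packages the two jet conditions into a single Lemma~\ref{lemma:codim} (with $W'$ and $W_0$ sitting in the \emph{same} jet space $J^{k-1}$) and the escape argument into Proposition~\ref{prop:get out}, but the content is identical.

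Two small bookkeeping corrections are needed. First, you place the bad locus $W_1$ in $J^k$ (taking brackets up to $m=k-1$), but Theorem~\ref{theo:thom} applied to a $W \subseteq J^k$ yields conclusions only in $C^{k+r}$ with $r \ge 1$, hence in $C^{k+1}$ rather than $C^k$; moreover the flow-box chart $\psi$ is only $C^k$ when $H_1 \in C^{k+1}$, and a $C^k$ chart induces a $C^1$ change of coordinates on $J^{k-1}$, not on $J^k$. The paper works in $J^{k-1}$ and stops at $m = k-2$: this still gives $\codim W' = k - 2d + 1 \ge \dim N + 1$ for $k \ge 4d$, and matches the stated regularity hypotheses exactly (after first reducing to $k=4d$ via Proposition~\ref{prop:compatibility}). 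Second, in your Baire step the sets $\gamma^{-1}(N^l)$ need not be closed, since the $N^l$ are submanifolds but not assumed to be closed subsets of $N$; the paper instead observes that each $\gamma^{-1}(N^l)$ consists of isolated points (by the transversality $X_{H_2}\notin TN^l$), so its \emph{closure} has empty interior, and then applies Baire to these closures.
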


\begin{theo}[Time-oriented, autonomous case]
\label{theo:to-aut}
Let $N$ be a compact symplectic manifold of dimension $2d$, and let $H_1\in C^{4d+1}(N)$ satisfy the non-degeneracy assumption $\eqref{ass:H_1}$. Then, for every $k\ge 4d$, the set
\[
\left\{ H_2\in C^k(N): \mathcal O^+_{H_1,H_2}(z)=N\ \ \forall\,z\in N \right\} 
\]
has rectifiable complementary of codimension $\ge 1$ in $C^{k}(N)$.
\end{theo}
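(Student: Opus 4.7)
The plan is to reduce Theorem \ref{theo:to-aut} to the time-unoriented Theorem \ref{theo:tu-aut} by combining the genericity already available there with Poincar\'e recurrence. More precisely, I expect the proof of Theorem \ref{theo:tu-aut} to be organised around an intermediate lemma asserting that the set
\[
\G=\left\{H_2\in C^k(N):\Lie^{k-1}_1(X_{H_1},X_{H_2})(z)=T_z N\ \ \forall\,z\in N\right\}
\]
has rectifiable complement of codimension $\ge 1$ in $C^k(N)$. Granting this lemma, the same set $\G$ will witness Theorem \ref{theo:to-aut}, and the proof below has nothing to do but glue the pieces together.

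Fix $H_2\in\G$. Since $\Lie^{k-1}_1\subseteq\Lie^{k-1}$, we have $\Lie^{k-1}(X_{H_1},X_{H_2})(z)=T_z N$ at every $z\in N$, so the local time-oriented Rashevski--Chow Theorem \ref{theo:chow-rash local}(ii) yields
\[
z\in\cl\interior\Oo^+_{H_1,H_2}(z)\cap\cl\interior\Oo^-_{H_1,H_2}(z)\qquad\forall\,z\in N.
\]
Compactness of $N$ guarantees that $X_{H_1}$ and $X_{H_2}$ are complete, and since the Hamiltonian flows preserve the Liouville volume form $\omega^d$, which is smooth of full support, Poincar\'e recurrence shows that every point of $N$ is non-wandering for both $X_{H_1}$ and $X_{H_2}$. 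The hypotheses of the global time-oriented Theorem \ref{theo:chow-rash global}(ii) are therefore satisfied, and we conclude that $\Oo^+_{H_1,H_2}(z)=N$ for every $z\in N$.

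The genuine obstacle is not in the present theorem but in the intermediate lemma on $\G$, which is in any case needed for Theorem \ref{theo:tu-aut}. Its proof should proceed by working separately on the open set $N\setminus N_0$, where the Hamiltonian flow-box Theorem \ref{theo:ham flowbox} provides Darboux coordinates in which $H_1=p_1$ and the iterated Poisson brackets $\{H_1,\{H_1,\dots,\{H_1,H_2\}\dots\}\}$ become iterated $\partial_{x_1}$-derivatives of $H_2$, and then applying the refined Thom transversality Theorem \ref{theo:thom} to the jet condition on $H_2$ that expresses the degeneracy $\Lie^{k-1}_1(X_{H_1},X_{H_2})(z)\subsetneq T_z N$. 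The delicate point will be the codimension bookkeeping: one needs a codimension strictly greater than $\dim N=2d$ in the jet space, which is where the hypothesis $k\ge 4d$ enters (it provides enough independent derivatives of $H_2$ to push the codimension past $\dim N$), while the contribution coming from the degenerate locus $N_0$ must be separately controlled, which is exactly what Assumption \ref{ass:H_1}, through the bounds $\codim_N N_0\ge d+1$ and $\codim_{TN}TN_0\ge 2d+2$, is tailored to supply.
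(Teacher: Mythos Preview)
Your reduction via Krener's theorem and Poincar\'e recurrence is exactly what the paper does on the open set $N'=N\setminus N_0$, but the intermediate lemma you posit is false as stated, and this is a genuine gap rather than a bookkeeping detail.

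Your set $\G$ demands $\Lie^{k-1}_1(X_{H_1},X_{H_2})(z)=T_zN$ at \emph{every} $z\in N$, including at critical points of $H_1$. But Assumption~\ref{ass:H_1} permits $H_1$ to have an isolated critical point $z_0$ with vanishing Hessian (take e.g.\ $H_1(z)=|z-z_0|^4$ in local coordinates; then $N_0=\{z_0\}$ has codimension $2d\ge d+1$). At such a point $X_{H_1}(z_0)=0$ and $DX_{H_1}(z_0)=0$, so a direct computation gives $[X_{H_1},Y](z_0)=0$ for every vector field $Y$, and by induction all iterated brackets vanish at $z_0$. Hence $\Lie^{k-1}_1(X_{H_1},X_{H_2})(z_0)=\Span\{X_{H_2}(z_0)\}$ is at most one-dimensional for every $H_2$, and $\G=\emptyset$. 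Its complement is certainly not rectifiable of positive codimension.

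The paper circumvents this by using a different generic set: it asks for the full-rank Lie condition only on $N'$ (this is the set $W'$ of Lemma~\ref{lemma:codim}), while on $N_0$ it imposes instead the transversality condition $X_{H_2}(z)\notin TN_0$ (the set $W_0$). The bound $\codim_{TN}TN_0\ge 2d+2$ from Assumption~\ref{ass:H_1} is used for this latter condition, not for any Lie-algebra condition over $N_0$. For $H_2$ in this generic set and $z\in N_0$, one cannot invoke Krener directly; instead Proposition~\ref{prop:get out} shows that the $H_2$-flow carries $z$ into $N'$ in arbitrarily short positive time, and a diagonal argument then yields $z\in\cl\interior\Oo^+_{H_1,H_2}(z)$ from the corresponding fact at nearby points of $N'$. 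Only then does Theorem~\ref{theo:chow-rash global}(ii) apply.
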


The proof of both theorems makes use of the following two results.

\begin{lemma}
\label{lemma:codim}
Let $N$ be a  symplectic manifold, and let $H_1$ be of class $C^{k+1}$ ($k\ge 2$) and satisfy the non-degeneracy Assumption \eqref{ass:H_1}. Let us set
\[
N_0=\{z\in N:d_z H_1=0\},\qquad N'=N\setminus N_0
\]
and define subsets $W',W_0\subseteq J^{k-1}(N,\R)$ by
\begin{align*}
W'&{=}\left\{ j \in J^{k-1}(N,\R): \text{if }j=j^{k-1}_zH_2\text{ then } z\in N' \text{ and } \Lie^{k-2}_1(X_{H_1},X_{H_2})(z)\subsetneq T_z N \right\}
\\
W_0 &{=}\left\{ j \in J^{k-1}(N,\R): \text{if }j=j^{k-1}_zH_2\text{ then } z\in N_0 \text{ and }  X_{H_2}(z)\in T N_0 \right\}.\footnotemark
\end{align*}
\footnotetext{Recall that in Assumption \ref{ass:H_1} we defined $T N_0$ as the union $\bigcup_{l\in\N}T N^{l}$ where $\{N^{l}\}_l$ is a once-for-all fixed countable family of submanifolds of codimension greater or equal than $\codim N_0$ and whose union covers $N_0$.}
We have:
\begin{align*}
\codim_{J^{k-1}(N,\R)} W'&= k-\dim N+1 
\\
\codim_{J^{k-1}(N,\R)} W_0 &\ge \dim N+2.
\end{align*}
In particular, 
\[
\codim (W'\cup W_0)\ge \dim N+1\quad\text{ as soon as }\quad k\ge 2\dim N.
\]
\end{lemma}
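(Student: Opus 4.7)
\medbreak
\noindent\textbf{Proof plan.} The plan is to bound $\codim W'$ and $\codim W_0$ separately by two essentially different arguments---a rank computation for $W'$ after a Hamiltonian normal-form reduction, and a submersion-pullback for $W_0$---and then take the minimum of the two codimensions.

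For $W'$ I would pass to a Hamiltonian flow-box chart. Around any $z\in N'$, Theorem \ref{theo:ham flowbox} yields a $C^k$ Darboux chart (recall $H_1\in C^{k+1}$) in which $H_1=p_1$ and hence $X_{H_1}=\pm\partial/\partial x_1$. The identity $X_{\{H_1,H_2\}}=[X_{H_1},X_{H_2}]$ then gives by induction $\{H_1,\{H_1,\ldots,\{H_1,H_2\}\ldots\}\}=\pm\partial_{x_1}^m H_2$, so by non-degeneracy of $\omega$ the condition $\Lie^{k-2}_1(X_{H_1},X_{H_2})(z)\subsetneq T_zN$ becomes the statement that the $k$ covectors
\[
dp_1,\; dH_2(z),\; d(\partial_{x_1}H_2)(z),\; \ldots,\; d(\partial_{x_1}^{k-2}H_2)(z)
\]
fail to span $T_z^*N$. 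Projecting out the line $\Span(dp_1)$ rephrases this as a rank-$\le (2d-2)$ condition on an explicit $(k-1)\times(2d-1)$ matrix. The key observation is that the entries of that matrix correspond to pairwise distinct multi-indices of Taylor coefficients of $H_2$ at $z$: the $m$-th row uses only derivatives of order exactly $m+1$, and within a row the different column indices label different multi-indices. Consequently the evaluation map from the jet fiber onto the matrix space is a coordinate projection, and pulling back the classical stratification of rank-$\le r$ $p\times q$ matrices (codimension $(p-r)(q-r)=k-2d+1$ with $p=k-1$, $q=2d-1$, $r=2d-2$) produces a finite union of $C^k$ submanifolds of codimension $k-\dim N+1$ in the chart. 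Exhausting $N'$ by countably many such charts then gives the global bound $\codim W' \ge k-\dim N+1$.

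For $W_0$ I would use the evaluation map
\[
\Phi \colon J^{k-1}(N,\R) \longrightarrow TN, \qquad \Phi(j^{k-1}_z H_2) = (z, X_{H_2}(z)),
\]
which depends only on the $1$-jet of $H_2$. The source projection $J^{k-1}(N,\R)\to N$ is already a submersion, and for fixed $z$ non-degeneracy of $\omega$ makes $dH_2(z)\mapsto X_{H_2}(z)$ a linear isomorphism $T_z^*N\to T_zN$; since $dH_2(z)$ varies freely within the $1$-jet, this takes care of the vertical direction of $TN$. Hence $\Phi$ is a smooth submersion. Since $W_0\subseteq\Phi^{-1}(TN_0)$ and $TN_0$ is covered by $C^1$ submanifolds of codimension $\ge\dim N+2$ in $TN$ by Assumption \ref{ass:H_1}, and preimages of submanifolds under a submersion preserve codimension, we obtain $\codim W_0\ge\dim N+2$. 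Combining the two bounds,
\[
\codim(W'\cup W_0) \ge \min\bigl(k-\dim N+1,\; \dim N+2\bigr),
\]
which is $\ge\dim N+1$ precisely when $k\ge 2\dim N$.

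The main technical care lies in the first step: verifying explicitly that the entries of the rank matrix correspond to genuinely independent Taylor coefficients, so that the rank-deficiency codimension transfers unchanged from matrix space to $J^{k-1}(N,\R)$, and patching the local chart-by-chart description into a single countable union of $C^1$ submanifolds of the jet bundle. The submersion argument for $W_0$ is comparatively routine.
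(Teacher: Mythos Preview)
Your proposal is correct and matches the paper's argument essentially step for step: the Hamiltonian flow-box reduction of $W'$ to a rank-deficiency condition on a $(k-1)\times(2d-1)$ matrix whose entries are independent jet coordinates, and the submersion $J^{k-1}(N,\R)\to TN$, $j^{k-1}_zH_2\mapsto X_{H_2}(z)$, pulling back the codimension of $TN_0$ for $W_0$. The only cosmetic differences are that the paper records the equality $\codim W'=k-\dim N+1$ (you state only $\ge$, which is all that is used downstream) and explicitly disposes of the trivial range $k<2d$ before computing.
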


\begin{prop}
\label{prop:get out}
Let $H_1, N_0,N'$ be as in Lemma \ref{lemma:codim} and $H_2\in C^k(N)$, $k\ge2$, satisfy
\[
X_{H_2}(z)\notin TN_0\qquad\forall\,z\in N_0.
\]
Then, the set
\[
\left\{t\in\R:\phi^t_{H_2}(z)\in N_0\right\}
\]
has empty interior for every $z\in N_0$. In particular, each $z\in N_0$ is accumulated by points in $\Oo^+_{H_1,H_2}(z)\cap N'$ as well as by points in $\Oo^-_{H_1,H_2}(z)\cap N'$.
\end{prop}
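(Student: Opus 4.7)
The plan is to prove the empty-interior claim by contradiction via a Baire-category argument, and to deduce the accumulation statement as an immediate corollary. Fix $z\in N_0$ and let $\gamma(t)=\phi^t_{H_2}(z)$, which is $C^1$ on the maximal interval of existence (since $H_2\in C^k(N)$ with $k\ge 2$ gives $X_{H_2}$ of class $C^1$). Assume for contradiction that the set $A=\{t:\gamma(t)\in N_0\}$ contains an open sub-interval $I$, so that $\gamma(I)\subseteq N_0\subseteq \bigcup_{l\in\N}N^l$, where $\{N^l\}_l$ is the countable family of $C^2$-submanifolds fixed in Assumption~\ref{ass:H_1}.

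The key step is to extract from $I$ a sub-interval on which $\gamma$ stays inside one single $N^{l_0}$. Because each $N^l$ is an embedded submanifold of the second-countable Hausdorff manifold $N$, it is $\sigma$-compact and hence $F_\sigma$ in $N$; consequently each $\gamma^{-1}(N^l)$ is $F_\sigma$ in $I$. Writing $I=\bigcup_l \gamma^{-1}(N^l)$ as a countable union of sets that are $F_\sigma$ in $I$ and applying the Baire category theorem, one of the corresponding closed pieces must have non-empty interior. Hence there exist an index $l_0$ and an open interval $I'\subseteq I$ with $\gamma(I')\subseteq N^{l_0}$.

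Now differentiate: for every $t\in I'$ one has $X_{H_2}(\gamma(t))=\gamma'(t)\in T_{\gamma(t)}N^{l_0}\subseteq TN_0$, while $\gamma(t)\in N^{l_0}\subseteq N_0$ forces $X_{H_2}(\gamma(t))\notin TN_0$ by hypothesis, a contradiction. This proves that $A$ has empty interior. The density of the complement of $A$ then yields the final statement: for every $\eps>0$ there exist parameters $t\in(0,\eps)$ (resp.\ $t\in(-\eps,0)$) with $\phi^t_{H_2}(z)\in N'$, and the corresponding points lie in $\Oo^+_{H_1,H_2}(z)\cap N'$ (resp.\ $\Oo^-_{H_1,H_2}(z)\cap N'$) and converge to $z$.

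The main obstacle is the Baire step, and more specifically the $F_\sigma$ property of each $N^l$ in $N$; this is automatic for embedded $C^1$-submanifolds of a second-countable manifold but would genuinely fail for arbitrary immersed ones, so one has to invoke (or at least comment on) the embedded-submanifold convention implicit in Definition~\ref{defin:codim}. Once that is granted, the remainder of the argument is essentially a direct application of the chain rule to a curve trapped in a single submanifold.
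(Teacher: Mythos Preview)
Your argument is essentially the paper's own: both apply Baire's theorem to the countable cover $\{N^l\}$ to reduce to a single submanifold and then use transversality of $X_{H_2}$ to it (the paper observes directly that each $\{t:\gamma(t)\in N^l\}$ consists of isolated points, while you reach the same conclusion by contradiction). One slip to fix: the inclusion $N^{l_0}\subseteq N_0$ you invoke is not part of Assumption~\ref{ass:H_1} (the $N^l$ \emph{cover} $N_0$ but need not lie inside it); the needed fact $\gamma(t)\in N_0$ follows instead simply from $t\in I'\subseteq I\subseteq A$.
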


\begin{proof}[{\bf Proof of Theorem \ref{theo:tu-aut}}]
First of all, let us notice that it suffices to prove the result for $k=4d$, thanks to Proposition \ref{prop:compatibility}.

Let $W'$ and $W_0$ be as in Lemma \ref{lemma:codim}. By that Lemma and by the positive-codimension version of the Thom transversality theorem (Theorem \ref{theo:thom}) we get that the set
\[
\left\{ H_2\in C^{4d}(N):j^{4d-1} H_2(N)\cap \big(W'\cup W_0\big)=\emptyset \right\}
\]
has rectifiable complementary of codimension $\ge 1$ in $C^{4d}(N)$. Thus it suffices to prove that any $H_2$ belonging to the set above satisfies $\Oo_{H_1,H_2}(z)=N$ for all $z\in N$. 

Let $H_2$ belong to the set above. If the set $N_0=\{z\in N:d_z H_1=0\}$ is empty, then, recalling the definition of $W'$, the conclusion  follows from Theorems \ref{theo:chow-rash local} and \ref{theo:chow-rash global}. If $N_0$ is not empty, the argument can be adapted as follows.

By connectedness of $N$, it suffices to prove that the orbit  of  every point $z\in N$ is open. 
 By Remark \ref{rem:orbit}, an orbit either is open or it has empty interior. 
 Hence we are reduced to prove that the orbit of every point  $z\in N$ has non-empty interior. This is true if $z\in N'=N\setminus N_0$, by definition of $W'$ and by the Rashevski-Chow Theorem. This is equally true if $z\in N_0$, because in this case we get by Proposition \ref{prop:get out} that the orbit of $z$ intersects (hence coincide with) the orbit of some point in $N'$, and we just proved that such an orbit has non-empty interior.

\end{proof}

\begin{proof}[{\bf Proof of Theorem \ref{theo:to-aut}}]
The proof is analogous to the one of Theorem \ref{theo:tu-aut} above. As before, it suffices to prove the result for $k=4d$. By repeating the first part of that proof, we  get that  the set
\begin{equation}
\label{eq:good set}
\left\{ H_2\in C^{4d}(N):j^{4d-1} H_2(N)\cap \big(W'\cup W_0\big)=\emptyset \right\}
\end{equation}
has rectifiable complementary of codimension $\ge 1$ in $C^{4d}(N)$. Hence it suffices to prove that any $H_2$ belonging to the set above satisfies the properties stated in the theorem.

Let $H_2$ belong to the set above.  If the set $N_0=\{z\in N:d_z H_1=0\}$ is empty, then the conclusion immediately follows from the time-oriented version of the  Rashevski-Chow Theorem (see Theorems \ref{theo:chow-rash local} and \ref{theo:chow-rash global})). Note that any flow is complete on the compact manifold $N$, and any Hamiltonian flow on $N$ has the property that all points are non-wandering by Poincaré recurrence Theorem, thus the Theorem \ref{theo:chow-rash global}(ii) is indeed applicable.

If $N_0$ is not empty we adapt the argument as follows: by the Theorem \ref{theo:chow-rash global}(ii) it suffices to prove that every $z\in N$ satisfies
\[
z\in\cl\interior\Oo^+_{X_1,X_2}(z)\cap\cl\interior\Oo^-_{X_1,X_2}(z).
\]
This is true if $z\in N'=N\setminus N_0$ by  Krener's Theorem, see Thm.\ \ref{theo:chow-rash local}(ii). 
 This is equally true if $z\in N_0$ as we  now show. Indeed, in this case we get by Proposition \ref{prop:get out} that $z$ is accumulated by a sequence $(z_n)_{n\in\N}$ of points in $\Oo^+_{H_1,H_2}(z)\cap N'$. Since each $z_n$ belongs to $N'$, we know by the previous case that $z_n$ is accumulated by a sequence $(z_{n,k})_{k\in\N}$ of points in $\interior\Oo^+_{H_1,H_2}(z_n)$. By definition of reachable set, we have $\interior\Oo^+_{H_1,H_2}(z_n)\subseteq \interior\Oo^+_{H_1,H_2}(z)$ for each $n$. Hence the subset $\{z_{n,k}\}_{n,k\in\N}$ is contained in $\interior\Oo^+_{H_1,H_2}(z)$ and has $z$ as  an accumulation point. This proves that $z\in\cl\interior\Oo^+_{H_1,H_2}(z)$. The proof for $\Oo^-$ is analogous.
\end{proof}

\begin{proof}[{\bf Proof of Lemma \ref{lemma:codim}}]
Let us first prove the inequality about $W'$. If $k< 2d$ then the condition $\Lie^{k-2}_1(X_{H_1},X_{H_2})\subsetneq T_z N$ is trivially satisfied because $\Lie^{k-2}_1(X_{H_1},X_{H_2})$ is spanned by $k$ vector fields, and the conclusion is true. Let us then suppose $k\ge 2d$.

The set $W'$ is locally defined (above the open set $N'\subseteq N$) by 
 the inequality
\begin{equation}
\label{eq:def W'}
\rk\begin{pmatrix}
d_z H_1
\\
d_z H_2
\\
d_z \{H_1,H_2\}
\\
\vdots
\\
d_z \underbrace{\{H_{1},\{H_{1},\{\dots,\{H_{1}}_\text{$k-2$ times},H_{2}\}\dots\}\}\}
\end{pmatrix}<2d.
\end{equation}
By the Hamiltonian Flow-box Theorem (Thm.\ \ref{theo:ham flowbox}) we can find, near any arbitrary point of $N'$, a local $C^{k}$ symplectic chart $\psi$ yielding identification with Darboux coordinates $z=(x,p)=(x_1,\dots,x_d,p_1,\dots,p_d)$ such that $H_1(z)=p_1$.\footnote{Note that $\psi$ also induces a change of coordinates on $J^{k-1}(N,\R)$, which is of class $C^1$ because $\psi$ is of class $C^{k}$. Hence the codimension of $W'$ is the same as the codimension of its image under this diffeomorphism. This legitimates the subsequent computations (and accounts for the requirement $H_1\in C^{k+1}$ rather than just $C^k$).
}
 A computation then shows that, in these coordinates, 
 \[
\underbrace{\{H_{1},\{H_{1},\{\dots,\{H_{1}}_\text{$m$ times},H_{2}\}\dots\}\}\}(z)=\partial_{x_1^m} H_2(z),
\]
for any function $H_2$ differentiable enough. As a consequence,
\begin{multline*}
d_z\ \underbrace{\{H_{1},\{H_{1},\{\dots,\{H_{1}}_\text{$m$ times},H_{2}\}\dots\}\}\}
=\left(\partial_{x_1^{m+1}}H_2\;,\;\dots\;,\;\partial_{x_d x_1^{m}}H_2\;,\;\partial_{p_1 x_1^{m}}H_2\;,\;\dots\;,\;\partial_{p_d x_1^{m}}H_2\right)(z)
\end{multline*}
and the definition \eqref{eq:def W'} for $W'$ becomes more explicit:
\[
{\rm rank\ }\begin{pmatrix}
0 & \dots & 0 & 1 & 0 & \dots & 0
\\
\partial_{x_1}H_2  & \dots &\partial_{x_d}H_2 & \partial_{p_1}H_2 & \partial_{p_2}H_2 & \dots & \partial_{p_d }H_2
\\
\vdots & \ddots &\vdots&\vdots&\vdots&\ddots&\vdots
\\
\partial_{x_1^{k-1}}H_2 & \dots &\partial_{x_d x_1^{k-2}}H_2 & \partial_{p_1 x_1^{k-2}}H_2 & \partial_{p_2 x_1^{k-2}}H_2 & \dots & \partial_{p_d x_1^{k-2}}H_2
\end{pmatrix}(z) < 2d.
\]
This is a $k\times 2d$ -- matrix. The first row corresponds to $d_z H_1$. The other rows correspond to the iterated Lie brackets computed above up to $m=k-2$, and their entries are clearly independent when regarded as jet-coordinates. We deduce that the codimension of the set $W'$ is the same as the codimension of the set of $(k-1)\times (2d-1)$ matrices with non-maximal rank. Since we are assuming $k\ge 2d$, this codimension is well-known to be
\[
k-2d+1
\]
as desired.
\medbreak

Let us now prove the inequality $\codim_{J^{k-1}(N,\R)} W_0\ge 2d+2$. By the Assumption \ref{ass:H_1} we have
\[
N_0\subseteq\bigcup_{l\in\N} N^{l}
\]
where each $N^{l}$ is a $C^2$ submanifold of $N$ of codimension greater or equal than $d+1$. Each tangent space $TN^{l}$ is a $C^1$ submanifold of $TN$ and
\[
\codim_{TN}TN^{l}\ge 2(d+1).
\]
Since the map
\begin{align*}
J^{k-1}(N,\R)&\to TN
\\
j^{k-1}_z H_2&\mapsto X_{H_2}(z)
\end{align*}
is a submersion for all $k\ge 2$ (due to non-degeneracy of the symplectic form), and $W_0$ is precisely the preimage of $TN_0=\cup_{l\in\N} TN^{l}$ via this map, we get
\[
\codim_{J^{k-1}}W_0=\codim_{TN}TN_0\ge 2(d+1),
\]
as desired.
\end{proof}

\begin{proof}[{\bf Proof of Proposition \ref{prop:get out}}]
Recall that $N_0$ is contained in a countable union of submanifolds $\{N^l\}_l$ and, by definition, $TN_0=\cup_l TN^l$. We want to prove that for each $z\in N_0$ the set
\[
\{t\in\R:\phi^t_{H_2}(z)\in N_0\}
\]
has empty interior. By Baire's Theorem, it suffices to prove that, for each fixed $l$, the closure of the set
\[
\{t\in\R:\phi^t_{H_2}(z)\in N^l\}
\]
has empty interior. This is easily seen to be true: the assumption
\[
X_{H_2}(z)\notin T_z N^l\qquad\forall\,z\in N^l
\]
implies that the set above is constituted by isolated points, and the closure of such a set always has empty interior.
\end{proof}

\section{The non-autonomous case}
\label{sect:per}

In this section we extend to the time-dependent case the results obtained in the previous section.

Let $N$ be a symplectic manifold of dimension $2d$, and $H_1,H_2\colon N\times\R\to\R$ two time-dependent Hamiltonians. They give rise to two time-dependent Hamiltonian vector fields on $N$ denoted respectively by  $X_{H_1},X_{H_2}$. Equivalently, they give rise to two time-independent vector fields $\tilde X_{H_1}$ and $\tilde X_{H_2}$ on $N\times\R$ defined by 
\[
\tilde X_{H_i}(z,t)=(X_{H_i}(z,t),1)\ \in\ T_{(z,t)} (N\times \R)\cong T_z N\times\R,\qquad (z,t)\in N\times\R,\ i\in\{1,2\}.
\]

We shall occasionally denote by $M$ the manifold $N\times\R$. If $(z,t)\in N\times\R$, in this section its orbit $\Oo_{H_1,H_2}(z,t)$ has to be intended as the orbit of $(z,t)$ through $\tilde X_{H_1}$ and $\tilde X_{H_2}$. It is therefore a subset of $N\times\R$. We adopt the analogous definition  for the reachable set $\Oo^+_{H_1,H_2}(z,t)$. 

We will make the following assumption on $H_1$:

\begin{ass}
\label{ass:H_1per}
The Hamiltonian $H_1$ is continuously differentiable and the subset $M_0\subseteq N\times\R$ defined by
\[
M_0=\{ (z,t)\in N\times\R:d_z H_1(z,t)=0 \},
\]
is contained in a countable union $\cup_{l\in\N} M^{l}$ of ${C^2}$-submanifolds of $N\times\R$ of codimension greater or equal than $
\frac{\dim N}2 +1$. Given such a family  $\{M^l\}_l$ we define the tangent space $TM_0$ as
\[
TM_0=\bigcup_{l\in \N} TM^{l}.
\]
Note that each $TM^l$ is a submanifold of class $C^1$ in $T(N\times\R)$. According to Definition \ref{defin:codim} we have
\begin{align*}
\codim_{N\times\R} M_0&\ge \frac{\dim N}2 +1
\\
\codim_{T(N\times\R)} TM_0&\ge 2\, \codim_{N\times\R} M_0\ge \dim N+2.
\end{align*}
\end{ass}

As in the autonomous case, the above assumption is generic by an easy application of the classical Thom transversality Theorem.

Let us now state the time-dependent versions of Theorems \ref{theo:tu-aut} and \ref{theo:to-aut}.

\begin{theo}[Time-unoriented, non-autonomous case]
\label{theo:tu-per}
Let $N$ be a symplectic manifold of dimension $2d$, and let $H_1\in C^{4d+2}(N\times\R)$ satisfy the non-degeneracy assumption $\eqref{ass:H_1per}$. Then, for every $k\ge 4d+1$ the set
\[
\left\{ H_2\in C^{k}(N\times\R): \Oo_{H_1,H_2}(z,t)=N\times\R\ \  \forall\,(z,t)\in N \right\}
\]
has rectifiable complementary of codimension $\ge 1$ in $C^k(N\times\R)$. 
\end{theo}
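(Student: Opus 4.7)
The plan is to carry out the argument of Theorem \ref{theo:tu-aut} on the $(2d{+}1)$-dimensional manifold $M = N \times \R$ using the suspension vector fields $\tilde X_{H_1}, \tilde X_{H_2}$. By the definitions recalled at the start of this section, the orbit $\Oo_{H_1, H_2}(z, t)$ of a point $(z, t) \in M$ coincides with its orbit under the pair $(\tilde X_{H_1}, \tilde X_{H_2})$ on $M$, so Theorems \ref{theo:chow-rash local}(i) and \ref{theo:chow-rash global}(i) are applicable (note that $M = N \times \R$ is connected since $N$ is). First I would set $M_0 = \{(z, t) \in M : d_z H_1(z, t) = 0\}$, $M' = M \setminus M_0$, and define jet-space subsets analogous to those of Lemma \ref{lemma:codim}:
\begin{align*}
W' &= \{ j^{k-1}_{(z,t)} H_2 \in J^{k-1}(M, \R) : (z, t) \in M',\ \Lie^{k-2}_1(\tilde X_{H_1}, \tilde X_{H_2})(z, t) \subsetneq T_{(z,t)} M \}, \\
W_0 &= \{ j^{k-1}_{(z,t)} H_2 \in J^{k-1}(M, \R) : (z, t) \in M_0,\ \tilde X_{H_2}(z, t) \in TM_0 \}.
\end{align*}
The strategy is to show $\codim(W' \cup W_0) \geq \dim M + 1 = 2d + 2$ in $J^{k-1}(M, \R)$ when $k \geq 4d + 1$, and then to apply Theorem \ref{theo:thom}.

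The bound for $W_0$ follows the pattern of Lemma \ref{lemma:codim}: the map $j^{k-1}_{(z,t)} H_2 \mapsto \tilde X_{H_2}(z, t) = (X_{H_2}(z, t), 1)$ is a submersion from $J^{k-1}(M, \R)$ onto the codimension-$1$ affine subbundle $V = \{v \in TM : dt(v) = 1\}$, and at each base point $(z, t) \in M_0$ the slice $\{v \in T_{(z,t)} M_0 : dt(v) = 1\}$ is either empty or affine of codimension $1$ in $T_{(z,t)} M_0$; using Assumption \ref{ass:H_1per}, this gives $\dim(TM_0 \cap V) \leq 2\dim M_0 - 1 \leq 2d - 1$, so $\codim W_0 \geq (4d + 1) - (2d - 1) = 2d + 2$. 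For $W'$, the computation of $\Lie^{k-2}_1$ simplifies because $\tilde X_{H_1}$ and $\tilde X_{H_2}$ both have $\partial_t$-component equal to $1$: their difference and all iterated brackets $[\tilde X_{H_1}, \cdot]$ lie in the spatial direction $T_z N$, so $\Lie^{k-2}_1(\tilde X_{H_1}, \tilde X_{H_2})(z, t) = T_{(z,t)} M$ iff the spatial projections span $T_z N$. These spatial projections are Hamiltonian vector fields $X_{G_m}$, with $G_0 = H_2$ and $G_m$ defined recursively through the first-order linear differential operator $\D G := \{H_1, G\}_N + \tilde\partial_t G$ (Poisson bracket on $N$ at fixed $t$ plus the product $t$-derivative on $M$), up to a fixed correction $-\tilde\partial_t H_1$ at $m = 1$. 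The condition defining $W'$ becomes the rank condition $\rk(d_z H_1, d_z G_0, \ldots, d_z G_{k-2}) < 2d$ on a $k \times 2d$ matrix of covectors whose first row $d_z H_1$ is fixed.

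To bound the codim of the rank-deficient locus in $J^{k-1}(M, \R)$, I would show that the linear map from jets of $H_2$ to the $(k{-}1) \times (2d{-}1)$ matrix obtained after quotienting out the fixed first row is surjective. The key observation is that $\D$, as a derivation on functions on $M$, coincides with $\tilde X_{H_1}$: $\D G = X_{H_1}(G) + \tilde\partial_t G = \tilde X_{H_1}(G)$. Thus $\D$ is first-order with principal symbol $\xi \mapsto \xi(\tilde X_{H_1})$, which is nonzero on $M'$. Iterating, $\D^m$ is an $m$-th order operator, and applying $d_z$ shows that the $i$-th component of $d_z G_m$ contains a top-order term of order $m{+}1$ involving an extra spatial differentiation of $H_2$ in direction $y_i$ on top of $m$ derivatives along $\tilde X_{H_1}$. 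For distinct pairs $(m, i)$ these top-order partial derivatives of $H_2$ belong to disjoint subsets of jet coordinates (different orders for different $m$, different spatial factor for different $i$), so every matrix entry can be set independently by varying appropriate high-order coefficients of $H_2$. The codim of the rank-deficient locus then equals the codim of $(k{-}1) \times (2d{-}1)$ matrices of rank $< 2d{-}1$, namely $k - 2d + 1$, so $\codim W' \geq 2d + 2$ for $k \geq 4d + 1$. Theorem \ref{theo:thom} now yields that the set of $H_2 \in C^k(M)$ whose $(k{-}1)$-jet avoids $W' \cup W_0$ has rectifiable complement of codim $\geq 1$; for such $H_2$, Theorem \ref{theo:chow-rash local}(i) handles points of $M'$, the natural analog of Proposition \ref{prop:get out} (identical proof, now using the family $\{M^l\}_l$ of Assumption \ref{ass:H_1per}) shows orbits through points of $M_0$ accumulate on $M'$, and Theorem \ref{theo:chow-rash global}(i) combined with connectedness of $M$ concludes.

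The main obstacle is the surjectivity of the jet-to-matrix map used to bound the codim of $W'$. Unlike in the autonomous case there is no single chart on $M$ that simultaneously straightens $H_1$ into a coordinate, maintains a fibrewise Darboux structure on $N$, and has a $t$-independent transition map: any $t$-dependent Darboux chart introduces correction terms through $\partial_t \psi_t$. The principal-symbol argument above avoids any such explicit normal-form computation; equivalently one may lift to the extended phase space $\tilde M = N \times T^*\R$ equipped with the symplectic form $\omega_N + dt \wedge dE$ and the autonomous extended Hamiltonian $\tilde H_1 = H_1 + E$, to which Theorem \ref{theo:ham flowbox} applies directly since $d\tilde H_1 \neq 0$ everywhere. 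The extra regularity $H_1 \in C^{4d+2}$ in the hypotheses is exactly what ensures that the chart-change induced on $J^{4d}(M, \R)$ is of class $C^1$, as required by Definition \ref{defin:codim}.
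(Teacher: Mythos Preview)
Your overall strategy coincides with the paper's: the paper also reduces Theorem~\ref{theo:tu-per} to a non-autonomous analogue of Lemma~\ref{lemma:codim} (this is Lemma~\ref{lemma:codim2}) and then repeats the argument of Theorem~\ref{theo:tu-aut} verbatim. Your treatment of $W_0$, your use of Proposition~\ref{prop:get out} for the family $\{M^l\}_l$, and your idea of establishing surjectivity of the jet-to-matrix map either via a principal-symbol argument or via the flow-box theorem on an extended phase space are all sound and close in spirit to what the paper does (the paper applies the flow-box theorem to $H_1(\cdot,t_0)$ at a fixed time $t_0$ and absorbs the $t$-dependence into a $C^1$ correction term vanishing at $t=t_0$).

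There is, however, a genuine error in your reduction for $W'$. You assert that
\[
\Lie^{k-2}_1(\tilde X_{H_1},\tilde X_{H_2})(z,t)=T_{(z,t)}M
\quad\Longleftrightarrow\quad
\rk\big(d_zH_1,\,d_zG_0,\dots,d_zG_{k-2}\big)=2d,
\]
i.e.\ a $k\times 2d$ rank condition with $d_zH_1$ as a fixed first row, and then conclude $\codim W'=k-2d+1$ exactly as in the autonomous case. This equivalence is false. Because both $\tilde X_{H_1}$ and $\tilde X_{H_2}$ have $\partial_t$-component equal to $1$, one has
\[
\Lie^{k-2}_1(\tilde X_{H_1},\tilde X_{H_2})(z,t)=\R\,\tilde X_{H_1}(z,t)\ \oplus\ \Big(\Lie^{k-2}_1\cap\big(T_zN\times\{0\}\big)\Big),
\]
and the horizontal part $\Lie^{k-2}_1\cap(T_zN\times\{0\})$ is spanned by $X_{H_2}-X_{H_1}$ together with the spatial parts of the $k-2$ iterated brackets: that is $k-1$ vectors in $T_zN$, \emph{none of which is fixed} (each depends on the jet of $H_2$). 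The fixed vector $\tilde X_{H_1}$ is ``used up'' to produce the $\partial_t$-direction and does not survive as an independent fixed row after projecting to $T_zN$. A point where the $k-1$ horizontal vectors fail to span $T_zN$ but $X_{H_1}$ completes them to a spanning set lies in $W'$ yet is missed by your $k\times 2d$ condition. The correct codimension is therefore that of rank-deficient $(k-1)\times 2d$ matrices, namely $k-2d$, which is precisely the value obtained in the paper's Lemma~\ref{lemma:codim2} --- one less than your $k-2d+1$.
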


Since both vector fields $\tilde X_{H_1}$ and $\tilde X_{H_2}$ induce the equation $\dot t=1$ on the $t$-variable, it is obviously impossible for the reachable set of a point to be the whole of $N\times\R$. For this reason we make the assumption that $H_1$ and $H_2$ are one-periodic in time, i.e.\ they are defined on $N\times\T$ where $\T=\R/\Z$. We then regard the reachable set of a point $(z,t)$ as a subset of $N\times\T$.

\begin{theo}[Time-oriented, periodic case]
\label{theo:to-per}
Let $N$ be a compact symplectic manifold of dimension $2d$, and let $H_1\in C^{4d+2}(N\times\T)$ satisfy the non-degeneracy Assumption $\eqref{ass:H_1per}$.   For every $k\ge 4d+1$  the set 
\begin{equation*}
\Big\{ H_2\in C^k(N\times\T): \Oo^+_{H_1,H_2}(z,t)=N\times\T\ \ \forall\ (z,t)\in N\times\T\Big\}
\end{equation*}
has rectifiable complementary of codimension $\ge 1$ in  $C^k(N\times \T)$.
\end{theo}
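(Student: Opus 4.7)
The plan is to reduce to the compact autonomous setting by passing to $M = N\times\T$ and working with the suspended vector fields $\tilde X_{H_i}(z,t) = (X_{H_i}(z,t), 1)$ for $i = 1, 2$: orbits and reachable sets of $(H_1, H_2)$ on $N\times\T$ in the sense of Section \ref{sect:per} coincide with those of $(\tilde X_{H_1}, \tilde X_{H_2})$ on $M$ in the sense of Section \ref{sect:chow}. The vector fields $\tilde X_{H_i}$ are complete, and each preserves the volume form $\omega^d\wedge dt$ on the compact manifold $M$ (its divergence vanishes); hence by Poincar\'e recurrence every point of $M$ is non-wandering for both vector fields, which makes Theorem \ref{theo:chow-rash global}(ii) applicable on $M$.

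The argument then mirrors the proof of Theorem \ref{theo:to-aut}. First I would establish a time-dependent analog of the Hamiltonian flow-box Theorem \ref{theo:ham flowbox}: whenever $d_z H_1(z_0,t_0)\ne 0$ there exist local coordinates $(x,p,t)$ on $M$ in which $H_1(x,p,t)=p_1$, obtained for instance by applying Theorem \ref{theo:ham flowbox} to the extended autonomous Hamiltonian $\bar H_1(z,t,\tau) = H_1(z,t) + \tau$ on $T^*(N\times\R)$. In these coordinates $\tilde X_{H_1} = \partial_{x_1} + \partial_t$, and a direct computation using the Jacobi identity gives $[\tilde X_{H_1}, X_F] = X_{(\partial_t - \partial_{x_1})F}$ for every $F$, so that
\[
\mathrm{ad}(\tilde X_{H_1})^m(\tilde X_{H_2} - \tilde X_{H_1}) = X_{(\partial_t - \partial_{x_1})^m H_2}, \qquad m\ge 1.
\]
I would then prove the analog of Lemma \ref{lemma:codim} for the subsets
\begin{align*}
W' &= \left\{j^{k-1}_{(z,t)}H_2 : (z,t)\in M\setminus M_0,\ \Lie_1^{k-2}(\tilde X_{H_1},\tilde X_{H_2})(z,t)\subsetneq T_{(z,t)}M\right\},\\
W_0 &= \left\{j^{k-1}_{(z,t)}H_2 : (z,t)\in M_0,\ \tilde X_{H_2}(z,t)\in TM_0\right\}
\end{align*}
of $J^{k-1}(M,\R)$, establishing $\codim(W'\cup W_0) \ge \dim M + 1 = 2d + 2$. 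The bound for $W_0$ follows from Assumption \ref{ass:H_1per} via the map $j^{k-1}_{(z,t)}H_2 \mapsto \tilde X_{H_2}(z,t)$, which is a submersion onto the affine subbundle $\{V_t = 1\}\subseteq TM$: for each stratum $M^l$ transverse to the time foliation, the condition $(X_{H_2}(z,t),1)\in TM^l$ imposes $\codim_M M^l$ on the base point $(z,t)\in M^l$ and $\codim_M M^l$ on the $1$-jet of $H_2$, totalling $2\codim_M M^l \ge 2d+2$. The bound for $W'$ reduces to a determinantal estimate on the matrix of jet coordinates built from the iterated brackets displayed above.

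Once these codimension bounds are in hand, Theorem \ref{theo:thom} furnishes a set of $H_2\in C^k(M)$ with rectifiable complement of codimension $\ge 1$ whose $(k-1)$-jet avoids $W'\cup W_0$. For any such $H_2$ and any $(z,t)\in M\setminus M_0$, the local Rashevski-Chow Theorem \ref{theo:chow-rash local}(ii) applied on $M$ delivers $(z,t)\in\cl\interior\Oo^{\pm}_{H_1,H_2}(z,t)$; for $(z,t)\in M_0$, an analog of Proposition \ref{prop:get out} (same Baire-category argument, applied to the flow of $\tilde X_{H_2}$ on $M$, now transverse to each stratum $M^l$) shows that $(z,t)$ is accumulated by points of $\Oo^{\pm}_{H_1,H_2}(z,t)\cap(M\setminus M_0)$, hence also lies in $\cl\interior\Oo^{\pm}_{H_1,H_2}(z,t)$. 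The conclusion then follows from Theorem \ref{theo:chow-rash global}(ii). The main technical obstacle I anticipate is the codimension estimate for $W'$: since $\tilde X_{H_1} = \partial_{x_1}+\partial_t$ has two nonzero components, the pivot reduction that in the autonomous case simultaneously eliminated one row and one column of the Jacobian matrix now eliminates only a row, and a careful bookkeeping of the additional bracket order permitted by the slightly strengthened hypothesis $k\ge 4d+1$ is required to reach the target codimension $2d+2$.
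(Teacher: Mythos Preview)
Your proposal follows the same route as the paper's own proof: suspend to $M=N\times\T$, invoke Poincar\'e recurrence for the non-wandering hypothesis of Theorem~\ref{theo:chow-rash global}(ii), establish the time-dependent analog of Lemma~\ref{lemma:codim} (this is exactly Lemma~\ref{lemma:codim2}), and conclude via Theorem~\ref{theo:thom}, the analog of Proposition~\ref{prop:get out}, and Krener's theorem. The one divergence is the normal-form step: rather than a full time-dependent straightening, the paper applies the autonomous flow-box only to $H_1(\cdot,t_0)$ via $\psi\times\id$ (so $H_1=p_1$ merely on the slice $\{t=t_0\}$, which produces an extra $H_1$-dependent matrix term $B_{H_1}(z,t_0)$ that is constant in the jet variable and hence irrelevant to the codimension count); this is more elementary and sidesteps the issue that your extended-phase-space construction on $T^*(N\times\R)$ does not, as written, descend to coordinates on the odd-dimensional $M$ with the properties you claim.
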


\begin{proof}[{\bf Proof of Theorems \ref{theo:tu-per} and \ref{theo:to-per}}]
The proof is the same as in the autonomous case (Theorems \ref{theo:tu-aut} and \ref{theo:to-aut}), once the time-dependent counterpart to Lemma \ref{lemma:codim} has been established.  This is the content of Lemma \ref{lemma:codim2} below. Note that for the time-oriented part one has to apply  at some point the Theorem \ref{theo:chow-rash global} (ii) which makes the assumption that the two considered flows have non-wandering dynamics. Since $\tilde X_{H_1}$ and $\tilde X_{H_2}$ preserve the measure $\mu\oplus dt$ on the compact manifold $N\times\T$, this assumption is indeed fulfilled by the Poincaré Recurrence Theorem.
\end{proof}

\begin{lemma}
\label{lemma:codim2}
Let $N$ be a symplectic manifold of dimension $2d$, and let $H_1\colon N\times\R$ be of class $C^{k+1}$ ($k\ge 2$) and satisfy the non-degeneracy assumption \eqref{ass:H_1per}. Let us define the subsets $M_0,M'\subseteq N\times\R$ by
\[
M_0=\{(z,t)\in N\times\R:d_z H_1(z,t)=0\},\qquad M'=(N\times\R) \setminus M_0
\]
and the subsets $W',W_0\subseteq J^{k-1}(N\times\R,\R)$ by
\begin{align*}
W'&{=}\left\{ j \in J^{k-1}(N\times\R,\R): \text{if }j=j^{k-1}_{(z,t)}H_2\text{ then } (z,t)\in M' \text{ and } \Lie^{k-2}_1(\tilde X_{H_1},\tilde X_{H_2})(z,t)\subsetneq T_{(z,t)} (N\times\R) \right\}
\\
W_0 &{=}\left\{ j \in J^{k-1}(N\times\R,\R): \text{if }j=j^{k-1}_{(z,t)}H_2\text{ then } (z,t)\in M_0 \text{ and }  \tilde X_{H_2}(z)\in T M_0 \right\}.
\end{align*}
We have:
\begin{align*}
\codim_{J^{k-1}(N\times\R,\R)} W'&= k-\dim N 
\\
\codim_{J^{k-1}(N\times\R,\R)} W_0 &\ge \dim (N\times\R)+1.
\end{align*}
In particular, $\codim (W'\cup W_0)\ge \dim N+1$ as soon as $k\ge 2\dim N+1$.
\end{lemma}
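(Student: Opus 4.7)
My plan is to follow the structure of the proof of Lemma \ref{lemma:codim}, now with the extended manifold $N\times\R$ in place of $N$ and the lifted time-independent vector fields $\tilde X_{H_i}=(X_{H_i},1)$ on $N\times\R$ in place of $X_{H_i}$. Two adjustments arise from the time dependence: for $W_0$, the evaluation map $j^{k-1}_{(z,t)}H_2\mapsto\tilde X_{H_2}(z,t)$ is no longer a full submersion onto $T(N\times\R)$ (its image sits in the codimension-one affine sub-bundle $\{v_t=1\}$); for $W'$, the iterated Lie bracket picks up extra $\partial_t$-contributions that must be tracked, and the Hamiltonian Flow-Box Theorem is applied to the spatial Hamiltonian $H_1(\cdot,t_0)$ at a chosen time $t_0$.

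For the bound on $\codim W_0$, I consider the map
\[
\Psi\colon J^{k-1}(N\times\R,\R)\longrightarrow T(N\times\R),\qquad j^{k-1}_{(z,t)}H_2\longmapsto\bigl((z,t),\tilde X_{H_2}(z,t)\bigr).
\]
Since the $t$-component of $\tilde X_{H_2}$ is identically $1$, the image of $\Psi$ is the codimension-one affine sub-bundle $\{v_t=1\}$, and by non-degeneracy of $\omega$ the spatial component of $\tilde X_{H_2}(z,t)$ varies freely with the $1$-jet of $H_2$, so $\Psi$ is a submersion onto this image. By Assumption \ref{ass:H_1per}, $TM_0\subseteq\bigcup_l TM^l$ with each $M^l$ of dimension $\le d$. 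Over any $p\in M^l$ the fiber of $TM^l\cap\{v_t=1\}$ is either empty or an affine subspace of $T_pM^l$ of dimension $\le d-1$, so $\dim(TM^l\cap\{v_t=1\})\le 2d-1$. Inside the $(4d+1)$-dimensional image of $\Psi$ this is codimension $\ge 2d+2$, and pulling back through the submersion gives $\codim W_0\ge 2d+2=\dim(N\times\R)+1$.

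For the bound on $\codim W'$, I argue locally near any $(z_0,t_0)\in M'$. Applying Theorem \ref{theo:ham flowbox} to the spatial Hamiltonian $H_1(\cdot,t_0)$ on $N$ yields Darboux coordinates $(x,p)=(x_1,\ldots,x_d,p_1,\ldots,p_d)$ near $z_0$ with $H_1(\cdot,t_0)=p_1$, so that $\tilde X_{H_1}|_{(z_0,t_0)}=\partial_{x_1}+\partial_t$. An induction on $m\ge 1$ then gives
\[
\underbrace{[\tilde X_{H_1},[\tilde X_{H_1},\ldots,[\tilde X_{H_1},\tilde X_{H_2}]\cdots]]}_{m\text{ brackets}}=(X_{F_m},0),\qquad F_m:=L^m H_2-L^{m-1}(\partial_t H_1),
\]
where $L(f):=\{H_1,f\}_z+\partial_t f=\tilde X_{H_1}(f)$ is the derivation along $\tilde X_{H_1}$; the vanishing $t$-component reflects that $\tilde X_{H_i}^t\equiv 1$ are constant. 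After subtracting the $\tilde X_{H_1}$-row from the $\tilde X_{H_2}$-row, the full-rank condition on the $k\times(2d+1)$ bracket matrix reduces to a rank-$2d$ condition on the $(k-1)\times 2d$ spatial submatrix with rows $d_z(H_2-H_1),\,d_zF_1,\,\ldots,\,d_zF_{k-2}$ at $(z_0,t_0)$. At $t=t_0$ the identity $H_1(\cdot,t_0)=p_1$ yields $L^mH_2=(\partial_{x_1}+\partial_t)^m H_2+R_m$ at $(z_0,t_0)$, where the remainder $R_m$ is a linear combination of partials of $H_2$ of order $\le m-1$ with coefficients drawn from the jet of $H_1$; hence the top-order contribution of row $m$ in the jet of $H_2$ is $d_z[(\partial_{x_1}+\partial_t)^m H_2]|_{(z_0,t_0)}$, whose $2d$ entries $\partial_c[(\partial_{x_1}+\partial_t)^m H_2]$ (as $c$ ranges over the Darboux coordinates) are linearly independent $(m+1)$-order partials of $H_2$ since the symbols $c\cdot(x_1+t)^m$ are distinct. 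A triangular (grading) argument across $m=0,1,\ldots,k-2$ then shows that the affine jet-to-matrix map is surjective onto $\R^{(k-1)\cdot 2d}$: one solves row-by-row, choosing the $(m+1)$-jet to adjust row $m$'s leading part after lower jets have been fixed. Pulling back the classical codimension $k-2d$ of the rank-$<2d$ locus (valid when $k-1\ge 2d$) gives $\codim W'\ge k-\dim N$ locally, and covering $M'$ by countably many such neighborhoods yields the global bound via Definition \ref{defin:codim}.

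Combining the two estimates, $\codim(W'\cup W_0)\ge\min(k-\dim N,\dim N+2)\ge\dim N+1$ whenever $k\ge 2\dim N+1$. The main technical delicacy is the handling of the correction terms $R_m$ in $L^m H_2$ and the $H_1$-dependent term $L^{m-1}(\partial_t H_1)$ appearing in $F_m$: both introduce lower-order partials of $H_2$ with coefficients involving the jet of $H_1$, and the grading argument is exactly what ensures that these corrections do not spoil the linear independence of the leading $(m+1)$-order coefficients across the $k-1$ rows of the submatrix.
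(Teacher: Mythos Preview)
Your proof is correct and follows essentially the same route as the paper: flow-box for $H_1(\cdot,t_0)$ to compute the iterated brackets, reduction to a determinantal locus for $W'$, and the map into $T(N\times\R)$ combined with Assumption~\ref{ass:H_1per} for $W_0$.

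Two small differences are worth noting. For $W_0$, the paper argues directly that the map $j^{k-1}_{(z,t)}H_2\mapsto (X_{H_2}(z),t,1)$ is transverse to each $TM^l$ and reads off $\codim W_0=\codim_{T(N\times\R)}TM_0\ge 2d+2$; your dimension count inside the hyperplane $\{v_t=1\}$ is an equivalent repackaging. For $W'$, the paper writes down an explicit closed formula $K_m=(\partial_{x_1}-\partial_t)^m H_2+(\partial_{x_1}-\partial_t)^{m-1}\partial_t H_1$ at $t=t_0$ and then decomposes the bracket matrix as $A(j')+B_{H_1}(z,t_0)+C_{H_1}(z,t,j')$ with $C\equiv 0$ at $t=t_0$ and $B$ independent of $j'$; your inductive formula $F_m=L^mH_2-L^{m-1}(\partial_t H_1)$ together with the explicit acknowledgment of the lower-order remainder $R_m$ and the row-by-row grading argument is a cleaner and more robust way of reaching the same surjectivity onto $(k-1)\times 2d$ matrices. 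Either way the fiberwise codimension is $k-2d$ and the local-to-global step is as in the paper.
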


\begin{rem}
 The lemma above is stated in the non-periodic setting. It holds in the periodic case as well by replacing the jet space $J^{k-1}(N\times\R,\R)$ with  $J^{k-1}(N\times\T,\R)$. The proof remains essentially unchanged.
\end{rem}

\begin{proof}[{\bf Proof of Lemma \ref{lemma:codim2}}]
The proof is very similar to the one of Lemma \ref{lemma:codim}. We start with proving the inequality about $W'$, for which we only consider the case $k\ge 2d+1$. In the other cases  the result is trivial by a dimensional argument.

Let $j_0$ be an arbitrary element of $W'$.
 We are going to prove that in a neighborhood of $j_0$ the codimension of $W'$ is bounded below by the desired value $k-\dim N$. The global bound on the codimension of $W'$ will then follow by standard arguments.  The subset $W'$ is defined by the inequality
\begin{equation}
\label{eq:rank}
\rk
\begin{pmatrix}
\tilde X_{H_1}
\\
\tilde X_{H_2}
\\
[\tilde X_{H_1},\tilde X_{H_2}]
\\
\vdots
\\
\underbrace{[\tilde X_{H_1},[\dots,[\tilde X_{H_1}}_\text{$k-2$ times},\tilde X_{H_2}]\dots]]
\end{pmatrix}
(z,t)<2d+1
\end{equation}
which depends just on the $(k-1)$-jet of $H_2$. Note that, since $H_1$ is of class $C^{k+1}$ and the matrix involves up to $k-1$ derivatives of $H_1$, its entries  are of class $C^2$ in the variable $j=j^{k-1}_{(z,t)}H_2\in J^{k-1}(N\times\R,\R)$.

Let us denote $(z_0,t_0)$ the source of the jet $j_0$. Since $j_0\in W'$, we have $d_z H_1(z_0,t_0)\neq 0$.  By the Hamiltonian Flowbox Theorem (Thm.~\ref{theo:ham flowbox}) applied at $H_1(\cdot,t_0)$, there exists a local $C^k$ diffeomorphism $\psi\times\id$ yielding identification with coordinates $(z,t)=(x,p,t)$ on $U\times\R$,\footnote{Note that $\psi\times\id$ also induces a local diffeomorphism of class $C^1$ from (a subset of) $J^{k-1}(N\times\R,\R)$ to $J^{k-1}(U\times\R,\R)$, thus preserving codimensions of subsets.} with $(x,p)=(x_1,\dots,x_d,p_1,\dots,p_d)$ being standard Darboux variables on the open set $U\subseteq\R^{2d}$, and such that the Hamiltonian in the new coordinates satisfies at time $t_0$
\[
H_1(z,t_0)=p_1\qquad\forall\,z\in U.
\]
Using these coordinates it is easy to compute the restriction at $t=t_0$ of the iterated Lie brackets with $\tilde X_{H_1}$. Indeed, for any $m\ge 1$ and any function $H_2(z,t)$ differentiable enough, the computation (which we omit) yields
\[
\underbrace{[\tilde X_{H_1},[\dots,[\tilde X_{H_1}}_\text{$m$ times},\tilde X_{H_2}]\dots]](z,t_0)=\big(X_{K_m}(z,t_0),0\big)\qquad\forall\,z=(x,p)\in U
\]
where $X_{K_m}$ is the Hamiltonian vector field on $U$ associated to the function $K_m\colon U\to\R$ defined by
\[
z\mapsto K_m(z)=(\partial_{x_1}-\partial_t)^mH_2(z,t_0)+(\partial_{x_1}-\partial_t)^{m-1}\partial_t H_1(z,t_0).
\]
Note that $K_m$ is a sum of two terms, with $H_2$ appearing just in the first summand and $H_1$ just in the second.

 Let us denote $j=(z,t,j')$ the elements of $J^{k-1}(U\times\R,\R)$, where the variable $j'$ regroups all  variables other than the source $(z,t)$, i.e.~$j'$ regroups the value of $H_2$ as well as all its partial derivatives up to order $k-1$.
 By using the explicit computation above we get that in the new coordinates the inequality \eqref{eq:rank} defining $W'$ becomes
\[
\rk
\Big(
A(j')+B_{H_1}(z,t_0)+C_{H_1}(z,t,j')
\Big)
<2d+1
\]
where: 
\begin{itemize}
\item the matrix $C_{H_1}(z,t,j')$ is a $C^1$ matrix which is identically zero for $t=t_0$ and its exact expression does not play any role here; 
\item
 the matrix $A(j')$ is given by\footnote{For reasons of page layout, the matrix $A(j')$ written here is rather the transpose of what it should be in according to the matrix in  \eqref{eq:rank}.}
\[
A(j')=
\begin{pmatrix}
   1  &     \partial_{p_1}H_2       &     \partial_{p_1}\partial_{(x_1-t)}H_2   &  \dots   &   \partial_{p_1}\partial_{(x_1-t)^{k-2}}H_2 
\\
  0 &    \partial_{p_2}H_2       &     \partial_{p_2}\partial_{(x_1-t)}H_2  &  \dots   &   \partial_{p_2}\partial_{(x_1-t)^{k-2}}H_2  
\\
\vdots & \vdots & \vdots & \ddots & \vdots 
\\
 0 &   \partial_{p_d}H_2       &     \partial_{p_d}\partial_{(x_1-t)}H_2   &  \dots   &   \partial_{p_d}\partial_{(x_1-t)^{k-2}}H_2  
\\
 0 &    -\partial_{x_1}H_2       &     -\partial_{x_1}\partial_{(x_1-t)}H_2 &  \dots   &   -\partial_{x_1}\partial_{(x_1-t)^{k-2}}H_2 
\\
\vdots & \vdots & \vdots & \ddots & \vdots 
\\
 0 &    -\partial_{x_d}H_2       &     -\partial_{x_d}\partial_{(x_1-t)}H_2  &  \dots   &   -\partial_{x_d}\partial_{(x_1-t)^{k-2}}H_2
 \\
 1 & 1 & 0 & \dots & 0
\end{pmatrix}
\]
(here the first two columns correspond to $\tilde X_{H_1}$ and  $\tilde X_{H_2}$, and the remaining columns account for the $H_2$-summand in the definition of $K_m$, for $m$ ranging from $1$ to $k-2$); 
\item the $C^1$ matrix $B_{H_1}(z,t_0)$, depending just on $z$ but not on $t$ and $j'$,  accounts for the $H_1$-summand in the definition of $K_m$ and its exact expression does not play any role.
\end{itemize}
From the expressions above it is clear that for $t=t_0$ the variable $j'$ appears just in the matrix $A$. Let us denote by $J^{k-1}_{(z_0,t_0)}$ the fiber of $J^{k-1}(U\times\R,\R)$ over $(z_0,t_0)$. From the expression of $A(j')$ above we deduce that the codimension of 
 $W'\cap J^{k-1}_{(z_0,t_0)}$ in $J^{k-1}_{(z_0,t_0)}$ is the same as 
the codimension of the set of $2d\times (k-1)$ matrices with non-maximal rank. Since we are assuming $k\ge 2d+1$, this codimension is well-known to be
\[
k-2d.
\]
Since $B_{H_1}$ and $C_{H_1}$ are of class $C^1$, this fiberwise estimate allows to deduce the local bound
\[
\codim_{J^{k-1}(U\times\R,\R)} W'\ge k-2d\qquad\text{in a neighborhood of $j_0$},
\]
which is what we wanted to prove.

\medbreak

Let us now prove the inequality about $W_0$. The set $W_0$ is the preimage of $TM_0$ under the map 
\begin{align*}
J^{k-1}(N\times\R,\R)&\to T(N\times\R)\cong TN\times \R\times\R
\\
j^{k-1}_{(z,t)} H_2&\mapsto \big(X_{H_2}(z),t,1\big)
\end{align*}
Let us write $TM_0=\cup_{l\in \N}TM^{l}$ with each $M^{l}$ being a $C^2$ submanifold. It is not difficult to check that the map above is transverse to the $C^1$ submanifold $TM^{l}$ for each $l$. Recalling the Assumption \ref{ass:H_1per} on $H_1$,
 we then deduce
 \[
\codim_{J^{k-1}(N\times\R,\R)}TM_0=\codim_{T(N\times\R)}\bigcup_{l\in\N}TM^{l}\ge 2d+2.\qedhere
\]
\end{proof}

\bibliographystyle{plain}



\end{document}